\numberwithin{equation}{section}
\newtheorem{lemma}{Lemma}
\newtheorem{prop}[lemma]{Proposition}
\newtheorem{theorem}[lemma]{Theorem}
\newcommand{\be}{\begin{eqnarray}}
	\newcommand{\ee}{\end{eqnarray}}
\newcommand{\beq}{\begin{equation}}
	\newcommand{\eeq}{\end{equation}}
\newcommand{\ben}{\begin{eqnarray*}}
	\newcommand{\een}{\end{eqnarray*}}
\numberwithin{lemma}{section}
\begin{document}
	
	\title{A novel model reduction method to solve inverse problems of parabolic type}

	\author{Wenlong Zhang\thanks{Corresponding author. Department of Mathematics $\&$  National Center for Applied Mathematics
			Shenzhen, Southern University of Science and Technology (SUSTech), 1088 Xueyuan Boulevard, University Town of Shenzhen, Xili, Nanshan, Shenzhen, Guangdong Province, P.R.China. (zhangwl@sustech.edu.cn).
		}
		\and
		Zhiwen Zhang\thanks{Corresponding author. Department of Mathematics, The University of Hong Kong, Pokfulam Road, Hong Kong SAR, P.R.China. (zhangzw@hku.hk).
	}}
	\date{}
	\maketitle

	\begin{abstract}
		
		In this paper, we propose novel proper orthogonal decomposition (POD)--based model reduction methods that effectively address the issue of inverse crime in solving parabolic inverse problems. Both the inverse initial value problems and inverse source problems are studied. By leveraging the inherent low-dimensional structures present in the data, our approach enables a reduction in the forward model complexity without compromising the accuracy of the inverse problem solution. Besides, we prove the convergence analysis of the proposed methods for solving parabolic inverse problems. Through extensive experimentation and comparative analysis, we demonstrate the effectiveness of our method in overcoming inverse crime and achieving improved inverse problem solutions. The proposed POD model reduction method offers a promising direction for improving the reliability and applicability of inverse problem-solving techniques in various domains.

		\medskip
		\noindent \textit{\textbf{AMS subject classification:}} 35R30,  65J20,  65M12, 65N21, 78M34.  
	\end{abstract}
	
	
	{\footnotesize {\bf Keywords}: parabolic inverse problem; regularization method; model reduction method; inverse crime; convergence analysis.}
	
	\section{Introduction}
	
	Inverse crime, the phenomenon where the forward model used for solving an inverse problem is the same as the one used for generating the data, poses a significant challenge in accurate and reliable inverse problem solutions. 
	
	Inverse problems arise in various fields of science and engineering, ranging from medical imaging and geophysics to material science and finance. Inverse problems require the estimation of an unknown parameter or field of interest from indirect measurements, which are often noisy and incomplete. The solution of inverse problems is challenging due to the ill-posedness of the problem, which leads to unstable and non-unique solutions. To overcome these challenges, various regularization techniques have been proposed to impose constraints on the solution space. However, the accuracy and reliability of inverse problem solutions can be significantly impacted by inverse crime. 
	
	Inverse crime refers to a situation where the forward model used for generating the data is the same as the one used for solving the inverse problem. This scenario leads to overly optimistic results and underestimates the uncertainties associated with the solution. Inverse crime can be a significant issue in practical applications, where the forward model is often an approximation of the underlying physical system and contains modeling errors and uncertainties. 
	
	To overcome the issue of inverse crime, we propose a novel Proper Orthogonal Decomposition (POD) model reduction method for solving inverse problems. The POD method is a data-driven technique that enables the identification of the dominant modes of variability in the data and the construction of a low-dimensional representation of the data. By leveraging the inherent low-dimensional structures present in the data, the POD method enables the reduction of the forward model complexity without compromising the accuracy of the inverse problem solution. 
	
	In this paper, we outline our new POD model reduction method for solving inverse problems and demonstrate its effectiveness in overcoming inverse crime. We first introduce the basic principles of the POD method and its application in inverse problems. We then present our new method for addressing the issue of inverse crime by incorporating the POD method into the inverse problem solution process. We demonstrate the performance of our method through extensive experimentation and comparative analysis with state-of-the-art methods. The results show that our proposed POD model reduction method outperforms existing methods in terms of accuracy and reliability, and offers a promising avenue for enhancing the applicability of inverse problem-solving techniques in various domains.

	One of the successful model reduction ideas in solving time-evolution problems is the proper orthogonal decomposition (POD) method \cite{sirovich1987,berkooz1993POD}. The POD method uses the data from an experiment or an accurate numerical simulation and extracts the most energetic modes in the system by using the singular value decomposition. This approach generates low-dimensional structures that can approximate the solutions to the time-evolution problem with high accuracy. The POD method has been used to solve many types of PDEs, including linear parabolic equations \cite{volkwein2013proper,kunisch2001galerkin},  Navier‐Stokes equations \cite{kunisch2001galerkin}, viscous G-equations \cite{gu2021error},  Hamilton–Jacobi–Bellman (HJB) equations \cite{kunisch2004hjb}, and optimal control problems \cite{alla2013time}. The interested reader is referred to \cite{quarteroni2015reduced,Willcox2015PODsurvey,hesthaven2016certified} for a comprehensive introduction to the model reduction methods. 
	
	In this paper, we will develop a novel POD method to solve the forward and inverse problems of the parabolic type.

	To start with, we consider a parabolic equation as follows:
	
	\begin{equation}\label{parabolic-equation}
		\left\{
		\begin{aligned}
			u_t +\mathcal{L}u &= f(x) &\mbox{in } \Omega\times (0, T), \\
			u(x, t)&= 0  &\mbox{on } \partial \Omega\times (0, T),\\
			u(x, 0)&= g(x) &\mbox{in } \Omega\,,
		\end{aligned} 
		\right.
	\end{equation}
	where $\Omega\subset \mathbb R^d$ $(d=1,2,3)$ is a bounded domain with a $C^2$ boundary or a convex domain satisfying the uniform cone condition, $\mathcal{L}$ denotes a second-order elliptic operator given by $\mathcal{L}u=-\nabla\cdot (q(x)\nabla u) +c(x)u$, and $g(x)$ is the initial condition. We assume the elliptic operator $\mathcal{L}$ is uniform elliptic, i.e., there exist $q_{\min}, a_{\max}>0$ such that $q_{\min}<q(x)<q_{\max}$ for all $x \in \Omega$. Additionally, we assume $q(x)\in C^{1}(\bar{\Omega})$, $c(x)\in C(\bar\Omega)$ and $c(x)\geq 0$. 
	
	Let $u$ represent the solution of the parabolic equation \eqref{parabolic-equation}. We define the forward operator $\mathcal{S}:$ $\mathcal{S}(f,g)=u(\cdot,T)$. The forward problem involves computing the solution $u(\cdot,t)$ for $t>0$ given the source term $f(x)$ and initial condition $g(x)$. The inverse problem, on the other hand, aims to reconstruct $f(x)$ or $g(x)$ from the final time measurement $m=u(\cdot,T)$. Typically, iterative methods are employed to solve the inverse problem. During each iterative step, one may need to solve the forward problem one or more times. Consequently, the majority of computations expenses are attributed to the computation of the forward problems. 
	
	In this paper, we will solve two types of inverse problems:
	\begin{enumerate}
		\item Inverse source problem: recover the source term $f(x)$ using the final time measurement $m=u(\cdot,T)$ and the known initial term $g(x)$. 
		\item Backward problem: recover the initial term $g(x)$ using the final time measurement $m=u(\cdot,T)$ and the known source term $f(x)$. 
	\end{enumerate}
	
Iterative methods are usually used to solve the inverse problems. For each iterative step, one may have to solve the forward problem one or more times, thus most of the computations are costed by the computation of the forward problem.

To solve the inverse problem in a faster way, the authors construct the POD basis functions from the snapshot solutions of the parabolic equation \eqref{parabolic-equation} with fixed source functions in \cite{ZhangJCP2023}. The proposed method accelerates the computation of the inverse source problem, yet   In this paper, we develop a novel POD method to solve the forward and inverse problems of the parabolic type. We will give a brief review of the traditional POD method in the appendix \ref{appendix-POD}, including the construction of the POD basis functions. 

The rest of the paper is organized as follows. In Section 2, we introduce the Ajoint-POD method for solving parabolic inverse source problems and provide the error estimate for the proposed methods. Similarly, in Section 3, we propose the Ajoint-POD method for solving parabolic backward problem and provide the corresponding error estimate. In Section 4, we present numerical results to demonstrate the accuracy of our method. Finally, concluding remarks are made in Section 5.

\section{Ajoint-POD method for parabolic inverse source problems}\label{sec:source}

	The traditional POD method has a drawback: to construct the POD basis functions, one needs to know the source term $f(x)$ or the initial term $g(x)$ in advance. However, in inverse problems, the source term or the initial term is precisely what we want to find. This can lead to the so-called inverse crime, which should be avoided in practice. In \cite{ZhangJCP2023}, the authors studied this issue by assuming that the true source term belongs to a known function class, thus avoiding the inverse crime. However, this approach does not completely address the issue of the inverse crime. 
	
	To tackle this challenge, we propose a novel model reduction method for this type of inverse problem: the Adjoint-POD method in this paper. Our new method efficiently solves inverse problems without requiring a priori information about the source term or initial term. By combining the Adjoint method's strengths with the POD method's model reduction capabilities, the Adjoint-POD method can efficiently and quickly solve inverse problems while avoiding the inverse crime issue.

\subsection{Ajoint POD method}\label{sec-adj}
	
	To demonstrate the idea of the Adjoint-POD method, we will first apply it to solve the inverse source problem. For the inverse source problem of the parabolic equation, the objective is to recover the unknown source term $f(x)$, given the final time measurement $m(x)=\mathcal{S}(f)=u(\cdot,T)$. In this case, $u$ satisfies the following equation:

	\begin{equation}\label{source}
		\left\{
		\begin{aligned}
			u_t +\mathcal{L}u &= f(x) &\mbox{in } \Omega\times (0, T), \\
			u(x, t)&= 0  &\mbox{on } \partial \Omega\times (0, T),\\
			u(x, 0)&= 0 &\mbox{in } \Omega\,.
		\end{aligned} 
		\right.
	\end{equation}
	
	Here we assume $u(x, t)= 0$ and $u(x, 0)= 0$ for simplicity, otherwise, one just need to subtract the background solution  from the measurement $m(x)$. Since the source term $f(x)$ is unknown, we cannot use the traditional POD method to obtain snapshots. Instead, we will acquire the snapshots from the following adjoint equation:
	\begin{equation}\label{source-adjoint}
		\left\{
		\begin{aligned}
			\Tilde{u}_t +\mathcal{L}\Tilde{u} &= m(x) &\mbox{in } \Omega\times (0, T), \\
			\Tilde{u}(x, t)&= 0  &\mbox{on } \partial \Omega\times (0, T),\\
			\Tilde{u}(x, 0)&= 0 &\mbox{in } \Omega\,.
		\end{aligned} 
		\right.
	\end{equation}
	
	Denote the snapshots 
	$\tilde y_k = \Tilde{u}(\cdot, t_{k - 1})$, $k = 1, \ldots, M + 1$  with $ M=\frac{T}{\Delta t}$, and $\tilde y_k = \overline{\partial} \Tilde{u}(\cdot, t_{k - M - 1})$, $k = M + 2, \ldots, 2m + 1$ with $\overline{\partial} \Tilde{u}(\cdot, t_{k}) = \frac{\Tilde{u}(\cdot, t_{k}) - \Tilde{u}(\cdot, t_{k - 1})}{\Delta t}$, $k = 1, \ldots, M$. Then we construct the new POD basis $\{\psi_1,...,\psi_{N_{\text{pod}}}\}$ using the method described in Appendix \ref{appendix-POD}  from the adjoint equation \eqref{source-adjoint}. 
 Denote $V_{\text{POD}}=span\{\psi_1,...,\psi_{N_{\text{pod}}}\}$

	We consider using these new POD basis functions $\{\psi_1,...,\psi_{N_{\text{pod}}}\}$ to approximate the forward problem to accelerate the computation. The fully discrete scheme is constructed on $V_{\text{pod}}$ and the solution is denoted by $U_k$ for $k=1\cdots M$. To be precise, we seek numerical solutions $U_k$'s such that
	
\begin{equation}\label{eqn:fully_discrete}
		(\bar{\partial}U_k,\psi)+a(U_k,\psi)=(f,\psi), \quad \forall \psi\in {V_{\text{pod}}}.
	\end{equation}
 Here the bilinear form $a(u,v)=(q\nabla u,\nabla v) +(cu,v)$. 
	We define the solution operator from the source term $f$ to the final time solution $U_M$ as $\mathcal{S}_{\text{pod}}$, i.e., $\mathcal{S}_{\text{pod}}f=U_M$. Using the new POD basis functions and the reduced-order model represented by $\mathcal{S}_{\text{pod}}$, we can efficiently solve the forward problem for each time step, significantly reducing the computational cost compared to the full-scale model. This approach is particularly useful when solving inverse problems, where multiple forward problem evaluations are required.

	\subsection{Convergence of the Adjoint-POD method} \label{sec:source-POD}
	
	We will first revisit an important property of the eigenvalue distribution for the classical elliptic operator $\mathcal{L}$ \cite{agmon,Fleckinger}. 
	\begin{prop}\label{para-lem:2.1} Suppose $\Omega$ is a bounded domain in $\mathbb{R}^d$ and $a(x), c(x)\in C^0(\bar{\Omega})$, $c(x)\geq 0$, then the eigenvalue problem
		\begin{align}\label{yy2}
			\mathcal{L}\psi =\mu\,\psi~~ \text{with} ~~ \psi_{\partial\Omega}=0
		\end{align}
		has a countable set of positive eigenvalues $\mu_1\le\mu_2\le\cdots$,  with its corresponding eigenfunctions 
		$\{\phi_k\}_{k=1}^\infty$ forming an orthogonal basis of $L^2(\Omega)$. 
		Moreover, there exist constants $C_1,C_2>0$ such that 
		$C_1 k^{2/d}\le \mu_k\le C_2k^{2/d}$ for all $k=1,2,\cdots.$
	\end{prop}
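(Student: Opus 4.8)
The plan is to establish the two assertions separately: discreteness and completeness of the spectrum, which is the standard consequence of treating the solution operator of $\mathcal{L}$ as a compact self-adjoint positive operator on $L^2(\Omega)$, and then the growth rate $C_1 k^{2/d}\le\mu_k\le C_2 k^{2/d}$, which follows from a min--max comparison with the Dirichlet Laplacian on cubes. For the first assertion I would record that the bilinear form $a(u,v)=(q\nabla u,\nabla v)+(cu,v)$ is symmetric and bounded on $H^1_0(\Omega)$, and that $q\ge q_{\min}>0$, $c\ge0$ together with the Poincar\'e inequality make it coercive, $a(u,u)\ge q_{\min}\|\nabla u\|_{L^2}^2\ge\gamma\|u\|_{H^1_0}^2$ for some $\gamma>0$. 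By Lax--Milgram, for each $h\in L^2(\Omega)$ there is a unique $Kh\in H^1_0(\Omega)$ with $a(Kh,v)=(h,v)$ for all $v\in H^1_0(\Omega)$, and $K$ is bounded from $L^2(\Omega)$ into $H^1_0(\Omega)$. Composing with the compact Rellich--Kondrachov embedding $H^1_0(\Omega)\hookrightarrow L^2(\Omega)$ shows $K:L^2(\Omega)\to L^2(\Omega)$ is compact; symmetry of $a$ gives $(Kh,g)=(h,Kg)$, so $K$ is self-adjoint, and $(Kh,h)=a(Kh,Kh)>0$ for $h\neq0$, so $K$ is positive and injective. The spectral theorem for compact self-adjoint operators then yields a nonincreasing null sequence $\lambda_1\ge\lambda_2\ge\cdots>0$ of eigenvalues of $K$ and an $L^2(\Omega)$-orthonormal basis $\{\phi_k\}$ of eigenfunctions; since $K\phi_k=\lambda_k\phi_k$ is the weak form of $\mathcal{L}\phi_k=\lambda_k^{-1}\phi_k$ with $\phi_k|_{\partial\Omega}=0$, putting $\mu_k=\lambda_k^{-1}$ gives the claimed eigensystem with $0<\mu_1\le\mu_2\le\cdots\to\infty$.

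For the two-sided bound I would use the Courant--Fischer min--max characterization
\[
\mu_k=\min_{\substack{E\subset H^1_0(\Omega)\\ \dim E=k}}\ \max_{0\neq u\in E}\frac{a(u,u)}{\|u\|_{L^2}^2},
\]
and compare with the $k$-th Dirichlet eigenvalue $\nu_k$ of $-\Delta$ on $\Omega$, which is given by the same formula with $a(u,u)$ replaced by $\|\nabla u\|_{L^2}^2$. Setting $c_{\max}=\max_{\bar\Omega}c$, the pointwise bounds $q_{\min}|\nabla u|^2\le q|\nabla u|^2+cu^2\le q_{\max}|\nabla u|^2+c_{\max}u^2$ and monotonicity of the min--max give
\[
q_{\min}\,\nu_k\ \le\ \mu_k\ \le\ q_{\max}\,\nu_k+c_{\max},
\]
so it suffices to prove $c_1 k^{2/d}\le\nu_k\le c_2 k^{2/d}$, the additive constant being harmless since $\nu_k\to\infty$. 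Since $\Omega$ is a bounded domain there are open cubes $Q_1\subset\Omega\subset Q_2$, and extension by zero together with domain monotonicity of Dirichlet eigenvalues give $\nu_k(Q_2)\le\nu_k(\Omega)\le\nu_k(Q_1)$. On a cube of side $\ell$ the Dirichlet eigenvalues are $\frac{\pi^2}{\ell^2}(n_1^2+\cdots+n_d^2)$ with integers $n_1,\dots,n_d\ge1$, and counting these lattice points in the ball of radius $\frac{\ell}{\pi}\sqrt{\lambda}$ intersected with the positive octant shows the counting function $N(\lambda)=\#\{k:\nu_k(Q)\le\lambda\}$ satisfies $b_1(\ell\sqrt\lambda)^d\le N(\lambda)\le b_2(\ell\sqrt\lambda)^d$ for large $\lambda$; inverting makes $\nu_k(Q)$ comparable to $\ell^{-2}k^{2/d}$, and substituting into the sandwich above gives $C_1 k^{2/d}\le\mu_k\le C_2 k^{2/d}$.

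The only genuinely delicate ingredient is the lattice-point count on the cube: one has to make the estimate for $N(\lambda)$ uniform and correctly account for the restriction to the positive octant and for lattice points lying on the bounding sphere. Everything else --- Lax--Milgram, Rellich compactness, the spectral theorem, the min--max comparison and domain monotonicity --- is routine. Alternatively, one can skip the cube computation altogether and invoke the classical Weyl asymptotics for second-order uniformly elliptic operators on bounded domains (see for instance \cite{agmon,Fleckinger}), which give the sharper $\mu_k\sim C(d)\,(|\Omega|^{-1}k)^{2/d}$ and in particular the asserted two-sided bound.
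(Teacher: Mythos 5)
Your proof is correct, and it is the standard argument. Note, though, that the paper does not actually prove this proposition: it is quoted as a classical fact with a pointer to Agmon's lectures and to Fleckinger--Lapidus, so there is no in-paper argument to compare against. What you have written is essentially the proof those references formalize: existence, discreteness and completeness of the spectrum via the compact, self-adjoint, positive solution operator (Lax--Milgram plus Rellich--Kondrachov plus the spectral theorem), and the two-sided Weyl bound $C_1k^{2/d}\le\mu_k\le C_2k^{2/d}$ via Courant--Fischer, the form comparison $q_{\min}\|\nabla u\|^2\le a(u,u)\le q_{\max}\|\nabla u\|^2+c_{\max}\|u\|^2$, domain monotonicity between inscribed and circumscribed cubes, and the lattice-point count for the cube spectrum. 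The one place that deserves the care you already flag is the counting function on the cube: the bounds $b_1(\ell\sqrt\lambda)^d\le N(\lambda)\le b_2(\ell\sqrt\lambda)^d$ only hold for $\lambda$ above a threshold depending on $\ell$ and $d$, so when you invert to get $\nu_k(Q)\asymp\ell^{-2}k^{2/d}$ you should either restrict to $k$ large and adjust $C_1,C_2$ to cover the finitely many remaining indices, or state the count with explicit additive corrections. A second, very minor, point: the coercivity constant $\gamma$ comes from the Poincar\'e inequality on the bounded domain $\Omega$, and Rellich compactness of $H^1_0(\Omega)\hookrightarrow L^2(\Omega)$ holds for an arbitrary bounded open set by extension by zero, so no boundary regularity is needed for the spectral part --- consistent with the proposition's weak hypotheses. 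Your closing remark that one may instead invoke the classical Weyl asymptotics directly is precisely the route the paper takes by citation.
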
 
	
	From the Proposition above, eigenfunction set $\{\phi_k\}_{k=1}^\infty$ forms an orthogonal basis of $L^2(\Omega)$. Then for any $f\in L^2(\Omega)$, we write $f=\sum_{k=1}^\infty f_k \phi_k$ for a set of coefficients $f_k$. 
	Let $u=\sum_{k=1}^\infty u_k(t) \phi_k$ be the solution of the problem \eqref{source}.
	Substituting these two expressions of $f$ and $u$ into the first equation of \eqref{source}, 
	we get by noting the fact that $\mathcal{L}\phi_k=\mu_k\phi_k$ and comparing the coefficients of $\phi_k$ 
	on both sides of the equation that $u_k(0)= 0$ and 
	\begin{equation}\label{ODEs1}
		u'_k(t) + \mu_k u_k=f_k \quad \quad \text{in} ~(0, T)\,. 
	\end{equation} 
	This equation expresses the time evolution of the coefficients $u_k(t)$ in terms of the coefficients $f_k$ of the source term $f$. We can write the solution as $u_k(T)=\alpha_k\,f_k$, with $\alpha_k=e^{-\mu_kT}\int_0^Te^{\mu_ks}ds=\frac{1}{\mu_k}(1-e^{-\mu_k T})$.  Noting that $Sf=u(\cdot, T)=\sum^\infty_{k=1} u_k(T) \phi_k$, we can formally write 
	\begin{equation*}
		S\Big(\sum_{k=1}^\infty f_k \phi_k\Big)=  \sum_{k=1}^\infty \alpha_k f_k \phi_k.
	\end{equation*}
	This representation of the solution operator $S$ provides a convenient way to compute the solution $u(\cdot, T)$ using the eigenfunctions $\phi_k$ and the coefficients $\alpha_k$. For simplicity, we approximate the source term $f(x)$ by a finite-dimensional truncation, i.e. \begin{equation}\label{truncation-source}
		f_{\text{app}}=\sum_{k=1}^L f_k \phi_k.
	\end{equation}
	Then, the solution $u(x,t)$ of the parabolic equation has the form: 
     \begin{equation}
     u(\cdot, T)=\sum^L_{k=1} \frac{1}{\mu_k}(1-e^{-\mu_k T})f_k \phi_k. 
 \end{equation}
 After simple calculation, we will also derive that 
 \begin{equation}
 \Tilde{u}(x,t)=\sum^L_{k=1} \frac{1}{\mu_k}(1-e^{-\mu_k T})(1-e^{-\mu_k t})f_k \phi_k.
 \end{equation}
  Actually the POD basis \eqref{PODbasisMethodOfSnapshot} is nothing but the singular value decomposition of the matrix $\tilde A=(\tilde y_1,...,\tilde y_M)$, where $\tilde y_j=(\Tilde{u}(x_1,t_j),...,\Tilde{u}(x_N,t_j))^T$. Here $x_1,...,x_N$ are the finite element nodes in $\Omega$. Suppose $A$ has the singular value decomposition: $A=U\Sigma V$, then $\psi_ks$ are exactly the first $M$ columns of $U$. 
	
	Let us denote $A=(y_1,...,y_M)$ and $\Tilde{A}=(\tilde y_1,...,\tilde y_M)$, the matrix $\Phi=(\phi_1(\vec x,t_1),...,\phi_L(\vec x,t_1))$, $F=\text{diag}(f_1,...,f_L)$ and $D=\text{diag}(\frac{1}{\mu_k}(1-e^{-\mu_k T}),...,\frac{1}{\mu_L}(1-e^{-\mu_L T}))$. Additionally, let us define an $L\times M$ matrix $J$ with entries  $J(i,j)=\frac{1}{\mu_i}(1-e^{-\mu_i t_j})$. $\Phi$ is a column orthogonal matrix due to the normal orthogonality of the eigenfunctions $\phi_k$. Utilizing the formulations of $u$ and $\tilde u$, we can represent the matrices $A$ and $\tilde A$ as follows:
	\begin{equation}
		A=\Phi F J, ~~\text{and}~~ \tilde A=\Phi D F J.
	\end{equation}
	Proposition \ref{Prop_PODError} demonstrates that the low-rank space $V_{\text{pod}}$ is the best $N_{\text{pod}}$-rank approximation of the column space of $\tilde A$. Our objective is to show that $V_{\text{pod}}$ is also a good approximation of the column space of $A$, which will validate the efficiency of the new POD method. To begin, let us 
	establish the relationship between the matrices $A$ and $\tilde A$.

	\begin{lemma}\label{lemma-span}
		If $L\leq M$, then $span\{y_1,...,y_M\}=span\{\tilde y_1,...,\tilde y_M\}$, i.e. $C(A)=C(\tilde A)$.
	\end{lemma}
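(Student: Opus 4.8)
The plan is to build on the two factorizations $A=\Phi F J$ and $\tilde A=\Phi D F J$ obtained just above, and to reduce the desired identity $C(A)=C(\tilde A)$ to a rank statement about $J$ alone. The matrix $\Phi$ has full column rank $L$, since its columns are the nodal vectors of the linearly independent eigenfunctions $\phi_1,\dots,\phi_L$; the diagonal matrix $D$ is invertible because each $\mu_k>0$ forces $\tfrac1{\mu_k}(1-e^{-\mu_k T})\neq0$; and $F=\mathrm{diag}(f_1,\dots,f_L)$ is invertible once we assume $f_k\neq0$ for $k\le L$ (otherwise one discards the vanishing modes and decreases $L$). Hence $x\mapsto\Phi F x$ is an injective map $\mathbb R^L\to\mathbb R^N$, and since the diagonal matrices $D$ and $F$ commute,
\begin{equation*}
C(A)=\Phi F\,\mathrm{range}(J),\qquad C(\tilde A)=\Phi D F\,\mathrm{range}(J)=\Phi F\,\big(D\cdot\mathrm{range}(J)\big).
\end{equation*}
Thus it suffices to show that $\mathrm{range}(J)=\mathbb R^L$, i.e.\ that the $L\times M$ matrix $J$ has full row rank $L$: then both column spaces collapse to $C(\Phi F)=C(\Phi)$. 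This is precisely where the hypothesis $L\le M$ enters — we need $J$ to have at least as many columns as rows.

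\noindent\textbf{Full row rank of $J$.} Assume first that $\mu_1,\dots,\mu_L$ are pairwise distinct. I would extract the $L\times L$ submatrix $J_0$ formed by the columns belonging to the time levels $t_1,\dots,t_L$ (available since $M\ge L$), with $J_0(i,j)=\tfrac1{\mu_i}(1-e^{-\mu_i t_j})$, and prove $\det J_0\neq0$. Pulling $1/\mu_i$ out of the $i$-th row and setting $x_i:=e^{-\mu_i\Delta t}\in(0,1)$, so that $e^{-\mu_i t_j}=x_i^{\,j}$, reduces the claim to $\det[\,1-x_i^{\,j}\,]_{i,j=1}^L\neq0$. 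Applying the column operations $C_j\leftarrow C_j-C_{j-1}$ for $j=L,L-1,\dots,2$ replaces the $(i,j)$ entry ($j\ge2$) by $x_i^{\,j-1}(1-x_i)$ while leaving $1-x_i$ in the first column, and factoring $(1-x_i)$ out of each row leaves the Vandermonde matrix $[x_i^{\,j-1}]_{i,j=1}^L$. Hence
\begin{equation*}
\det J_0=\Big(\prod_{i=1}^L\tfrac1{\mu_i}\Big)\Big(\prod_{i=1}^L(1-x_i)\Big)\prod_{1\le i<k\le L}(x_k-x_i),
\end{equation*}
which is nonzero because each $x_i\in(0,1)$ and the $x_i$ are pairwise distinct (the $\mu_i$ being distinct and $r\mapsto e^{-r\Delta t}$ injective). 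Therefore $\mathrm{rank}(J)=L$, and the reduction above finishes the proof.

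\noindent\textbf{Degenerate eigenvalues and the main difficulty.} If some of the chosen eigenfunctions share a common eigenvalue, $J$ is genuinely rank-deficient and the minor above vanishes, so the argument must be adapted. The remedy is to group $\{1,\dots,L\}$ by the distinct values $\nu_1<\cdots<\nu_p$ ($p\le L$) among the $\mu_k$, put $\Psi_\ell:=\sum_{k:\,\mu_k=\nu_\ell}f_k\phi_k$ (nonzero after deleting empty groups, and mutually $L^2$-orthogonal since they lie in distinct eigenspaces), and observe that $y_j=\sum_{\ell=1}^p\tfrac1{\nu_\ell}(1-e^{-\nu_\ell t_j})\Psi_\ell$ and $\tilde y_j=\sum_{\ell=1}^p d_\ell\,\tfrac1{\nu_\ell}(1-e^{-\nu_\ell t_j})\Psi_\ell$ with $d_\ell\neq0$; the same Vandermonde computation, now applied to the $p\times M$ matrix $[\tfrac1{\nu_\ell}(1-e^{-\nu_\ell t_j})]$ with genuinely distinct $\nu_\ell$ and $p\le M$, shows that $C(A)$ and $C(\tilde A)$ both equal $\mathrm{span}\{\Psi_1,\dots,\Psi_p\}$. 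I expect this bookkeeping — rather than any single computation — to be the main obstacle; one must also be slightly careful that only the injectivity of $\Phi F$, and not the exact Euclidean orthonormality of $\Phi$'s columns, is what the $L^2$-orthogonality of the eigenfunctions actually supplies, while the determinant evaluation itself is routine.
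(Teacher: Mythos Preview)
Your proof is correct and shares the paper's overall strategy: both reduce $C(A)=C(\tilde A)$ to the statement that the $L\times M$ matrix $J$ has full row rank $L$, using that $\Phi$ has independent columns and that the diagonal matrices $D,F$ are invertible. The difference lies only in how the nonsingularity of the leading $L\times L$ minor of $J$ is established. You exploit the uniform time grid $t_j=j\Delta t$ to write $e^{-\mu_i t_j}=x_i^{\,j}$, perform the column operations $C_j\leftarrow C_j-C_{j-1}$, and factor each row to obtain an explicit Vandermonde determinant. The paper instead argues by contradiction: writing the minor as $J'=\mathbf e\mathbf e^{T}-V_L$ with $V_L(i,j)=e^{-\mu_i t_j}$, it assumes a nonzero kernel vector $\mathbf c$, forms the exponential sum $f(x)=\sum_j c_j e^{xt_j}$, observes that $f$ then takes the same value at the $L+1$ points $0,\mu_1,\dots,\mu_L$, and applies Rolle's theorem to force $f'$ to have $L$ zeros, which is impossible for a nontrivial exponential sum with $L$ distinct exponents. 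Your computation is more elementary and yields a closed formula for the determinant; the paper's Rolle-type argument does not rely on the uniform spacing of the $t_j$ and would extend verbatim to an arbitrary time grid. Your treatment of repeated eigenvalues, by regrouping into the distinct values $\nu_1,\dots,\nu_p$ with aggregated eigencomponents $\Psi_\ell$, is also more explicit than the paper, which simply asserts that the repeated case is similar.
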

	\begin{proof}
		Here we provide a concise proof for the case when the eigenvalues $\mu_j$ are distinct from each other.
		The proof for the case of repeated eigenvalues follows a similar approach. To demonstrate the desired results, 
		we need to show that the existence of matrices $P$ and $\tilde P$ such that,
		$$\Phi D F J P=\Phi F J,$$
		$$\Phi D F J=\Phi F J \tilde P.$$
		We will only present a brief proof for the first equality, as the second can be derived in a similar manner. 
		
		Since the columns of $\Phi$ are independent and the diagonal matrix $F$ is invertible, it suffices to prove
		the existence of a matrix $P$ such that 
		$$J P=D J.$$
		
		First, we show that the matrix $J_{L\times L}'$ with entries $J'(i,j)=1-e^{-\mu_i t_j}$ is invertible. 
		Denote the vector $\textbf{e}=(1,...,1)^T$. Then, we can express $J'$ as 
		$$J'=\textbf{e}\textbf{e}^{T}-V_L,$$ 
		where $V_L(i,j)=e^{-\mu_i t_j}$ is a Vandermonde matrix. To prove the invertibility of $J'$, we assume, 
		by contradiction, that $J'$ is singular. In that case, there exists a nonzero vector $\textbf{c}=(c_1,...,c_L)^T$ such that 
		$$J'\textbf{c}=0,$$ 
		or equivalently, 
		$$V_L\textbf{c}=\textbf{e}\textbf{e}^{T}\textbf{c}.$$
		Now, consider the function 	$f(x)=\sum_{j=1}^L c_je^{xt_j}$. 
		Under this assumption, we have that 
		$$f(0)=f(\mu_1)=f(\mu_2)=\cdots=f(\mu_L),$$
		which implies that the function $f$ has $L+1$ distinct zeros. This implies that the derivative $f'(x)=\sum_{j=1}^L c_jt_je^{xt_j}$ has $L$ distinct zeros. Since $\textbf{c}$ is a nonzero vector and all $\mu_j$s are all nonzero, this will
		imply that the Vandermonde matrix $V_L$ is singular, which contradicts the fact that $V_L$ is an invertible matrix. Consequently, $J'$ must be a nonsingular matrix. 
		
		Since the invertibility of $J'$, the first $L$ columns of $J$ are independent and thus form a basis for $R^L$. Similarly, the matrix $DJ$ also has independent columns that form a basis for $R^L$. Consequently, there must 
		exist a matrix $P$ such that $$J P=D J.$$
		This result establishes that the spans of the sets $\{y_1,...,y_M\}$ and $\{\tilde y_1,...,\tilde y_M\}$ are equivalent, i.e., $\operatorname{span}\{y_1, \dots, y_M\} = \operatorname{span}\{\tilde y_1, \dots, \tilde y_M\}$. This ends the proof.
	\end{proof}
	
	Based on Proposition \ref{Prop_PODError}, the new POD basis effectively approximates the set $\{\tilde y_1,...,\tilde y_M\}$. Given the previous results, we can now demonstrate that the new POD basis also serves as a good approximation 
	for the original set $\{ y_1,..., y_M\}$.
	\begin{theorem}\label{theorem-pod-app}
		Using the same notation as in Proposition \ref{Prop_PODError}, if a sufficient number of snapshots are available, i.e. $L\leq M$, then the following approximation error bound holds: 
		\begin{equation}\label{appro-adj}		
			\frac{\sum_{i=1}^{M }\left|\left| y_i - P_{\text{pod}}y_i\right|\right|_{L^2(\Omega)}^{2}}{		 \sum_{i=1}^{M}\left|\left| y_i\right|\right|_{L^2(\Omega)}^{2}} 
			\leq C L^{4/d} \rho,
		\end{equation} 
		where $P_{\text{pod}}$ is the projection operator onto the adjoint-POD space $\operatorname{span}\{\psi_1, \dots, \psi_{N_{\text{pod}}}\}$ and  $\rho=\frac{\sum_{k={N_{\text{pod}}}+1}^{2M + 1}  \lambda_k}{\sum_{k=1}^{2M + 1}\lambda_k}$.
	\end{theorem}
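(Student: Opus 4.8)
The plan is to transport the optimal low‑rank approximation property that Proposition~\ref{Prop_PODError} gives for the adjoint snapshots $\{\tilde y_i\}$ over to the forward snapshots $\{y_i\}$, by exploiting the explicit factorizations $A=\Phi FJ$, $\tilde A=\Phi DFJ$ together with the column–space identity of Lemma~\ref{lemma-span}, and then paying for the change of snapshot family with a norm estimate for the associated "transfer matrix''. The scaling $L^{4/d}$ will emerge from the growth rate $\mu_k\asymp k^{2/d}$ of the elliptic eigenvalues (Proposition~\ref{para-lem:2.1}), since $\|D^{-1}\|_2\asymp\mu_L\asymp L^{2/d}$.

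First I would make Lemma~\ref{lemma-span} quantitative. Its proof shows, using $L\le M$, that $J\in\mathbb{R}^{L\times M}$ has full row rank $L$, so $J^{+}:=J^{T}(JJ^{T})^{-1}$ satisfies $JJ^{+}=I_{L}$; setting $P:=J^{+}D^{-1}J\in\mathbb{R}^{M\times M}$ and using that the diagonal matrices $D,F$ commute, one gets $\tilde AP=\Phi DFJ\,J^{+}D^{-1}J=\Phi D F D^{-1}J=\Phi FJ=A$, i.e.\ $A=\tilde AP$ with $\|P\|_{2}\le\|J^{+}\|_{2}\,\|D^{-1}\|_{2}\,\|J\|_{2}=\kappa(J)\,\|D^{-1}\|_{2}$. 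By Proposition~\ref{para-lem:2.1}, $\mu_{k}\le C_{2}L^{2/d}$ for $k\le L$, hence $D^{-1}_{kk}=\mu_{k}/(1-e^{-\mu_{k}T})\le(1-e^{-\mu_{1}T})^{-1}\mu_{L}$ and therefore $\|D^{-1}\|_{2}^{2}\le CL^{4/d}$; similarly $\|D\|_{2}=\max_{k}(1-e^{-\mu_{k}T})/\mu_{k}\le\mu_{1}^{-1}$, which gives $\|\tilde A\|_{F}=\|DFJ\|_{F}\le\|D\|_{2}\|FJ\|_{F}$, that is, $\|\tilde A\|_{F}^{2}\le\mu_{1}^{-2}\|A\|_{F}^{2}$. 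The factor $\kappa(J)$ appearing in the bound for $\|P\|_{2}$ is a finite quantity depending only on the time grid $\{t_{j}\}$ and the eigenvalues $\{\mu_{k}\}$, and I would absorb it into the constant $C$.

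Finally I would chain the estimates. Writing $y_{i}$ for the columns of $A$ and using $\|XY\|_{F}\le\|X\|_{F}\|Y\|_{2}$,
\[
\sum_{i=1}^{M}\big\|y_{i}-P_{\text{pod}}y_{i}\big\|_{L^{2}(\Omega)}^{2}
=\big\|(I-P_{\text{pod}})A\big\|_{F}^{2}
=\big\|(I-P_{\text{pod}})\tilde A\,P\big\|_{F}^{2}
\le\|P\|_{2}^{2}\,\big\|(I-P_{\text{pod}})\tilde A\big\|_{F}^{2}.
\]
By Proposition~\ref{Prop_PODError} applied to the full adjoint snapshot set $\{\tilde y_{1},\dots,\tilde y_{2M+1}\}$ (of which $\tilde y_{1},\dots,\tilde y_{M}$ form a sub‑collection, so restricting to it only discards non‑negative terms), $\|(I-P_{\text{pod}})\tilde A\|_{F}^{2}\le\sum_{k=N_{\text{pod}}+1}^{2M+1}\lambda_{k}=\rho\sum_{k=1}^{2M+1}\lambda_{k}=\rho\sum_{i=1}^{2M+1}\|\tilde y_{i}\|_{L^{2}(\Omega)}^{2}$. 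A short computation with the Fourier representations of $u$ and $\tilde u$, again using $\|D\|_{2}\le\mu_{1}^{-1}$ together with the analogous estimate for the finite‑difference snapshots $\overline{\partial}\tilde u(\cdot,t_k)$, shows $\sum_{i=1}^{2M+1}\|\tilde y_{i}\|_{L^{2}(\Omega)}^{2}\le C\|A\|_{F}^{2}=C\sum_{i=1}^{M}\|y_{i}\|_{L^{2}(\Omega)}^{2}$. Combining the three steps, inserting $\|P\|_{2}^{2}\le CL^{4/d}$, dividing by $\sum_{i}\|y_{i}\|_{L^{2}(\Omega)}^{2}$ and collecting constants yields exactly \eqref{appro-adj}. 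The step I expect to be genuinely delicate is the bound on the transfer matrix $\|P\|_{2}$ (equivalently on $\kappa(J)$): the crude pseudoinverse estimate above suffices if one allows $C$ to depend on the spectral gaps and the discretization parameters, whereas a constant free of such dependence would require a finer, structure‑exploiting argument that is not needed for the stated estimate; the bookkeeping between the $M$‑column forward matrix and the $(2M+1)$‑column adjoint matrix also requires a little care with the finite‑difference snapshots but is otherwise routine.
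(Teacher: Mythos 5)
Your proposal follows essentially the same route as the paper's proof: factor $A=\tilde A P$ with a transfer matrix built from $J$ and $D^{-1}$, bound its norm by $\|D^{-1}\|_2\lesssim\mu_L\lesssim L^{2/d}$ (whence $L^{4/d}$ after squaring), show $\sum_i\|\tilde y_i\|^2\leq C\sum_i\|y_i\|^2$ via $\|D\|\lesssim 1$, and conclude with Proposition~\ref{Prop_PODError}. The only differences are cosmetic and in your favor: you treat $L\le M$ directly via the pseudoinverse rather than reducing to $L=M$, you use $\|XY\|_F\le\|X\|_F\|Y\|_2$ in place of the paper's entrywise Cauchy--Schwarz with $\|P\|_F$, and you are explicit that the constant absorbs $\kappa(J)$ --- a dependence the paper's proof hides inside its loosely justified claim that $\|P\|_d:=\|D^{-1}\|_2$ is a matrix norm with $\|P\|_F\le C\|P\|_d$.
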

	\begin{proof}
		In the following proof, we assume $L=M$ for simplicity. For the case $L<M$, the proof is similar. 
		
		Using the same notation of Lemma \ref{lemma-span}, $\Phi$ and $J$ are both invertible square matrices. Then there exists a unique matrix $P$ such that,
		$$\Phi D F J P=\Phi F J,$$
		and $P=J^{-1}D^{-1}J$. 
		
		Hence $y_j=\sum^L_{i=1}P_{ij}\tilde y_i$. Then by using the Cauchy-Schwarz inequality, for any $1\leq j\leq L$, we have that 
		\begin{align}
			\|y_j-P_{\text{pod}}y_j\|^2\leq \sum_{i=1}^L P^2_{ij}\sum_{i=1}^L\|\tilde y_i-P_{\text{pod}} \tilde y_i\|^2.
		\end{align}
		Hence,
		\begin{align}
			\sum_{j=1}^L\|y_j-P_{\text{pod}}y_j\|^2\leq \sum_{i,j=1}^L P^2_{ij}\sum_{i=1}^L\|\tilde y_i-P_{\text{pod}} \tilde y_i\|^2=\|P\|^2_F\sum_{i=1}^L\|\tilde y_i-P_{\text{pod}} \tilde y_i\|^2.
		\end{align}
		The rest is to estimate the Frobenius norm of $P$. Since $P=J^{-1}D^{-1}J$, we define $\|P\|_d=\|D^{-1}\|_2$. It is easy to verify that $\|\cdot\|_d$ is a matrix norm. Then,
		$$\|P\|_F\leq C \|P\|_d= C \|D^{-1}\|_2\leq C \mu_L.$$
		
		On the other hand, since $\Phi$ is an orthogonal matrix, we have,
		\begin{align}
			\sum_{j=1}^L\|\tilde y_j\|^2&=\|\Phi D F J \|_F^2\\
			&=\|D F J\|_F^2 \leq \|D \|_F^2\|F J\|_F^2\\
			&\leq  C  \|F J\|_F^2 \\
			&\leq  C \sum_{j=1}^L\|y_j\|^2.
		\end{align}
		
		Alighed with Proposition \ref{Prop_PODError}, we finally have,
		\begin{align}
			\frac{\sum_{i=1}^{M}\left|\left| y_i - P_{\text{pod}}y_i\right|\right|_{L^2(\Omega)}^{2}}{		 \sum_{i=1}^{M}\left|\left| y_i\right|\right|_{L^2(\Omega)}^{2}} &\leq C \|P\|^2_F \frac{\sum_{i=1}^{M}\left|\left| \tilde y_i - P_{\text{pod}}\tilde y_i\right|\right|_{L^2(\Omega)}^{2}}{		 \sum_{i=1}^{M}\left|\left| \tilde y_i\right|\right|_{L^2(\Omega)}^{2}}\\
			&\leq \mu_L^2 \rho.
		\end{align}
		
		The conclusion comes with the estimation $\mu_i\leq C i^{2/d}$. 
	\end{proof}
	
	\subsection{Convergence of inverse parabolic source problem}\label{sec:convergence-source}
	
	To solve this inverse source problem, we use the well-established Tikhonov regularization method, expressed as  
	\begin{align}\label{general-regularization}
		\mathop {\rm min}\limits_{f \in X}         \|\mathcal{S}(f)-m\|_{L^2(\Omega)}^2+\lambda \|f\|_{L^2(\Omega)}^2.
	\end{align}
	
 However, in the conventional application of the POD method, the source term $f$ and the initial condition $g$ 
must be determined initially to generate snapshots and obtain the POD basis functions. In the context of inverse problems, the only available information is the measurement $m(x)$. This predicament, referred to as the inverse crime, makes this method impossible to implement in practice. Our new method could overcome this vital drawback by setting the forward solver to be our new POD forward solver.
	
	In the general discrete approximation of problem \eqref{general-regularization}, we seek to solve the following least-squares regularized optimization problem:
	\begin{align}\label{disc}
		\mathop {\rm min}\limits_{f\in V_{\text{pod}}} \|\mathcal{S}_{\text{pod}}(f)-m\|_{L^2(\Omega)}^2+\lambda \|f\|_{L^2(\Omega)}^2.
	\end{align}

	Consider the functional $\mathcal{J}_{\text{pod}}[f]=\|\mathcal{S}_{\text{pod}}f-m\|_{L^2(\Omega)}^2+\lambda \|f\|_{L^2(\Omega)}^2$. 
	By computing the Fr$\acute{e}$chet derivative of $\mathcal{J}_{\text{pod}}[f]$, we can derive the subsequent iterative scheme:
	\begin{align}
		f_{k+1}=f_{k}-\beta d\mathcal{J}_{\text{pod}}[f_{k}], \quad \forall k\in \mathbb{N},
	\end{align}
	where $\beta$ is the step size, $d\mathcal{J}_{\text{pod}}[f]=\mathcal{S}_{\text{pod}}^*(\mathcal{S}_{\text{pod}}f-m)+ \lambda f$ denotes the Fr$\acute{e}$chet derivative, and $f_{0}$ is an initial guess \cite{ZhangJCP2023}. 
	
	The above theory is based on noise-free case, i.e. the final time measurement $m=u(\cdot,T)$ is assumed to be precisely known. 
	However, in practical applications, measurement data often contains uncertainties.  We assume the measurement data is blurred by noise and takes the discrete form $$m^n_i=u(d_i,T)+e_i, i=1, \cdots, n,$$ where $d_i$s represent the positions of detectors and $\{e_i\}^n_{i=1}$ are independent and identically distributed (i.i.d.) random variables on an appropriate probability space ($\mathfrak{X},\mathcal{F},\mathbb{P})$. 
	Based on \cite{Chen-Zhang2021} and the analysis therein, we know that $\|u\|_{C([0,T];H^2(\Omega))}\le C\|f\|_{L^2(\Omega)}$. According to the embedding theorem of Sobolev spaces, we know that $H^2(\Omega)$ is continuously embedded into $C(\bar\Omega)$ so that  $u(\cdot,T)$ is well defined point-wisely for all $d_i\in \Omega$. Without loss of generality, we assume that the scattered locations $\{d_i\}_{i=1}^n$ are uniformly distributed in $\Omega$. That is, there exists a constant $B>0$ such that ${d_{\max}}/{d_{\min}} \leq B$, where ${d_{\max}}$ and ${d_{\min}}$ are defined by 
	\begin{align}\label{aa}
		d_{\max}=\mathop {\rm sup}\limits_{x\in \Omega} \mathop {\rm inf}\limits_{1 \leq i \leq n} |x-d_i|  
		~~~\mbox{and} ~~ ~
		d_{\min}=\mathop {\rm inf}\limits_{1 \leq i \neq j \leq n} |d_i-d_j|.
	\end{align}
	
	We will first use the technique developed in \cite{Chen-Zhang} to recover the final time measurement $u(\cdot,T)$ from the noisy data $m^n_i$ for $i=1,...,n$. We approximate $u(\cdot,T)$ by solving the following minimization problem:
	\begin{align}\label{denoise}
		m=\mathop {\rm argmin}\limits_{u \in X} \frac{1}{n}\sum_{i=1}^n(u(x_i)-m^n_i)^2+\alpha |u|_{H^2(\Omega)}^2.
	\end{align}
	
	Assume the pointwise noise $e_i$ has a bounded variance $\sigma$, which is referred to as the noise level. \cite{Chen-Zhang} analyzed this problem and provided optimal convergence results. Moreover, they proposed an a posteriori algorithm to obtain the best approximation without knowing the true solution $m$ and noise level $\sigma$. Here, we list their main results. If one chooses the optimal regularization parameter
	$$\alpha^{1/2+d/8}=O(\sigma n^{-1/2} \|u(\cdot,T)\|^{-1}_{H^2(\Omega)}),$$
	then the solution $m$ of \eqref{denoise} achieves the optimal convergence
	\begin{equation}\label{error-denoise}
		\mathbb{E}\big[ \|u(\cdot,T)-m\|_{L^2(\Omega)}\big] \leq C \alpha^{1/2}\|u(\cdot,T)\|_{H^2(\Omega)}.
	\end{equation} 
	
	And if the noise $\{e_i\}^n_{i=1}$ are independent Gaussian random variables with variance $\sigma$, we further have, 
	
	\begin{equation}
		\mathbb{P}( \|u(\cdot,T)-m\|_{L^2(\Omega)} \geq \alpha^{1/2}\|u(\cdot,T)\|_{H^2(\Omega)}z) \leq 2e^{-Cz^2}.
	\end{equation} 
	
	Using this recovered function $m(x)$, we generate the adjoint POD basis functions in Section \ref{sec-adj}. It can be easily shown that, with uncertainty, the POD basis functions are still good low-rank approximation of the snapshots $\{y_1,...,y_M\}$. Combining the Theorem \ref{theorem-pod-app} and \eqref{error-denoise}, we shall have that for any $1\leq i \leq M$, 
	
	\begin{equation}\label{appro-adj-noise}		
		\left|\left| y_i - P_{\text{pod}}y_i\right|\right|_{L^2(\Omega)}^{2}
		\leq C (M L^{4/d} \rho +\alpha) \|f\|^2_{L^2(\Omega)},
	\end{equation} 
	
	Since we replace the source term by a finite truncation \eqref{truncation-source}, and if $f\in H^1(\Omega)$, 
	
	\begin{equation}
		\|f-f_{\text{app}}\|_{L^2}\leq C \frac{\|\nabla f\|_{L^2}}{\sqrt{\mu_L}} \leq C \frac{\|\nabla f\|_{L^2}}{{L^{1/d}}}. 
	\end{equation}
	
	If $f\in L^2(\Omega)$, then $f_{\text{app}}\rightarrow f$ as $L\rightarrow +\infty$. We assume 
	\begin{equation}
		\|f-f_{app}\|^2_{L^2}\leq \varepsilon,
	\end{equation}
 where $\varepsilon$ depends on $L$. 
	With those results, using a similar technique to prove the Theorem 4.1 in \cite{ZhangJCP2023}, we have the following convergence results. 
	\begin{theorem}
		Let $\{e_i\}^n_{i=1}$ be independent random variables satisfying $\mathbb{E}[e_i]=0$ and $\mathbb{E}[e^2_i]\leq \sigma^2$ for $i=1,\cdots, n$. Set $\alpha^{1/2+d/8}=O(\sigma n^{-1/2} \|u^*(\cdot,T)\|^{-1}_{H^2(\Omega)})$ in \eqref{denoise}, and $\lambda =O(M L^{4/d} \rho +\alpha)$, then 
		\begin{align}\label{pod1}
			\mathbb{E}\big[\|\mathcal{S}f^*- \mathcal{S}_{\text{pod}}f_{\text{pod}}\|_{L^2(\Omega)}^2\big]\leq C\lambda\|f^*\|^2_{L^2(\Omega)} +C\varepsilon,
		\end{align}
		\begin{align}
			\mathbb{E}\big[\|f^*- f_{\text{pod}}\|_{L^2(\Omega)}^2\big]\leq C\|f^*\|^2_{L^2(\Omega)}+C\varepsilon,
		\end{align}
		and
		\begin{align}
			\mathbb{E}\big[\|f^*- f_{\text{pod}}\|_{H^{-1}(\Omega)}^2\big]&\leq C\lambda^{1/2}\|f^*\|^2_{L^2(\Omega)}+C\varepsilon.
		\end{align}
	\end{theorem}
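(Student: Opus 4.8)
The plan is to combine the three ingredients already assembled in the paper: (i) the denoising estimate \eqref{error-denoise} (and its high-probability refinement), which controls $\mathbb{E}[\|u^*(\cdot,T)-m\|_{L^2(\Omega)}]$ in terms of $\alpha^{1/2}\|u^*(\cdot,T)\|_{H^2(\Omega)}$; (ii) the POD approximation bound \eqref{appro-adj-noise}, showing that the adjoint-POD space reproduces the exact snapshots $\{y_i\}$ up to an error of order $ML^{4/d}\rho+\alpha$ times $\|f^*\|_{L^2(\Omega)}^2$, so that $\|\mathcal{S}f-\mathcal{S}_{\text{pod}}f\|_{L^2(\Omega)}^2$ is comparably small for the relevant $f$; and (iii) the truncation error $\|f^*-f_{\text{app}}\|_{L^2}^2\le\varepsilon$. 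I would first write $\mathcal{S}f^*-\mathcal{S}_{\text{pod}}f_{\text{pod}} = (\mathcal{S}f^*-\mathcal{S}f_{\text{app}}) + (\mathcal{S}f_{\text{app}}-\mathcal{S}_{\text{pod}}f_{\text{app}}) + (\mathcal{S}_{\text{pod}}f_{\text{app}}-\mathcal{S}_{\text{pod}}f_{\text{pod}})$ and bound the first term by $\varepsilon$ using boundedness of $\mathcal{S}$, the second term by the POD estimate, and defer the third term to the optimality argument for the regularized minimizer.

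For the third term I would follow the standard Tikhonov comparison argument exactly as in the proof of Theorem 4.1 of \cite{ZhangJCP2023}: since $f_{\text{pod}}$ minimizes $\mathcal{J}_{\text{pod}}$ over $V_{\text{pod}}$, testing against the projection $P_{\text{pod}}f_{\text{app}}$ (or $f_{\text{app}}$ itself if it lies in $V_{\text{pod}}$) gives
\begin{align}
\|\mathcal{S}_{\text{pod}}f_{\text{pod}}-m\|_{L^2(\Omega)}^2+\lambda\|f_{\text{pod}}\|_{L^2(\Omega)}^2 \le \|\mathcal{S}_{\text{pod}}P_{\text{pod}}f_{\text{app}}-m\|_{L^2(\Omega)}^2+\lambda\|P_{\text{pod}}f_{\text{app}}\|_{L^2(\Omega)}^2.
\end{align}
On the right-hand side I would insert $m = u^*(\cdot,T) + (m-u^*(\cdot,T)) = \mathcal{S}f^* + (m-u^*(\cdot,T))$, use the triangle inequality, the POD bound \eqref{appro-adj-noise}, the denoising bound \eqref{error-denoise}, and the truncation bound to show the right-hand side is $\le C(\lambda+ML^{4/d}\rho+\alpha)\|f^*\|_{L^2(\Omega)}^2 + C\varepsilon$ in expectation; with the choice $\lambda=O(ML^{4/d}\rho+\alpha)$ this is $\le C\lambda\|f^*\|_{L^2(\Omega)}^2+C\varepsilon$. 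This simultaneously yields the a priori bound $\mathbb{E}[\|f_{\text{pod}}\|_{L^2(\Omega)}^2]\le C\|f^*\|_{L^2(\Omega)}^2 + C\varepsilon/\lambda$, hence $\mathbb{E}[\|f^*-f_{\text{pod}}\|_{L^2(\Omega)}^2]\le C\|f^*\|_{L^2(\Omega)}^2+C\varepsilon$ by triangle inequality, and the residual bound $\mathbb{E}[\|\mathcal{S}_{\text{pod}}f_{\text{pod}}-m\|_{L^2(\Omega)}^2]\le C\lambda\|f^*\|_{L^2(\Omega)}^2+C\varepsilon$, from which \eqref{pod1} follows by adding and subtracting $m$ and $\mathcal{S}_{\text{pod}}f_{\text{app}}$ and reusing the earlier decomposition.

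For the $H^{-1}$ estimate I would exploit the smoothing of the parabolic solution operator: the eigenvalue representation $\mathcal{S}(\sum f_k\phi_k)=\sum\alpha_k f_k\phi_k$ with $\alpha_k=\mu_k^{-1}(1-e^{-\mu_kT})\sim\mu_k^{-1}\sim k^{-2/d}$ shows that $\mathcal{S}$ (and similarly $\mathcal{S}_{\text{pod}}$ on $V_{\text{pod}}$) acts as an isomorphism-like smoother, so an interpolation inequality $\|w\|_{H^{-1}(\Omega)}\le C\|w\|_{L^2(\Omega)}^{1/2}\|\mathcal{S}^{-1}\text{-type bound}\|$, or more directly the bound $\|f^*-f_{\text{pod}}\|_{H^{-1}(\Omega)}^2\le C\|\mathcal{S}f^*-\mathcal{S}_{\text{pod}}f_{\text{pod}}\|_{L^2(\Omega)}\cdot\|f^*-f_{\text{pod}}\|_{L^2(\Omega)}$ (interpolating the spectral multiplier), combined with the first two estimates, yields the claimed $\lambda^{1/2}$ rate. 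The main obstacle I anticipate is making the $V_{\text{pod}}$-versus-$X$ discrepancy rigorous in the minimization comparison: $f_{\text{app}}$ need not lie in $V_{\text{pod}}$, so one must carry the projection error $\|f_{\text{app}}-P_{\text{pod}}f_{\text{app}}\|_{L^2}$ and the operator mismatch $\|(\mathcal{S}-\mathcal{S}_{\text{pod}})f_{\text{app}}\|_{L^2}$ through cleanly — this is precisely where Theorem \ref{theorem-pod-app}/\eqref{appro-adj-noise} is needed, and where the factor $ML^{4/d}\rho$ (rather than just $L^{4/d}\rho$) enters from summing the snapshot errors. A secondary technical point is handling the expectation: since \eqref{error-denoise} controls $\mathbb{E}[\|u^*(\cdot,T)-m\|_{L^2}]$ but the comparison inequality involves the square, I would either use the variance bound to get $\mathbb{E}[\|u^*(\cdot,T)-m\|_{L^2}^2]\le C\alpha\|u^*(\cdot,T)\|_{H^2}^2$ directly from \cite{Chen-Zhang}, or apply Jensen/Cauchy–Schwarz carefully, and similarly for the cross terms.
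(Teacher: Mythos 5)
Your proposal is correct and follows essentially the route the paper itself indicates: the paper gives no written proof of this theorem, deferring to ``a similar technique to prove Theorem 4.1 in \cite{ZhangJCP2023}'', which is exactly the Tikhonov minimizer-comparison argument you describe, fed by the denoising bound \eqref{error-denoise}, the adjoint-POD approximation bound \eqref{appro-adj-noise}, the truncation error $\varepsilon$, and the smoothing/interpolation step for the $H^{-1}$ rate. The only point worth flagging is that your comparison argument naturally yields $C\varepsilon/\lambda$ rather than $C\varepsilon$ in the $L^2$ bound on $f_{\text{pod}}$ after dividing by $\lambda$, a discrepancy with the stated theorem that the paper's omitted proof does not resolve either.
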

	
	Furthermore, if we assume the noise $\{e_i\}^n_{i=1}$ are independent Gaussian random variables with variance $\sigma$, we will have a stronger type of convergence, one can refer to \cite{Chen-Zhang2021} for a similar proof. We just list the results here.
	
	\begin{theorem}
		Let $\{e_i\}^n_{i=1}$ be independent Gaussian random variables with variance $\sigma$. Set $\alpha^{1/2+d/8}=O(\sigma n^{-1/2} \|u^*(\cdot,T)\|^{-1}_{H^2(\Omega)})$ in \eqref{denoise}, and $\lambda =O(M L^{4/d} \rho +\alpha)$, then there exists a constant C, for any $z>0$, 
		\begin{align}
			\mathbb{P}(\|S_{\text{pod}}f_{\text{pod}}-Sf^*\|_{L^2(\Omega)}\geq(\lambda^{1/2}\|f^*\|_{L^2}+\varepsilon)z)\leq 2e^{-Cz^2},
		\end{align}
		\begin{align}
			\mathbb{P}(\|f_{\text{pod}}-f^*\|_{L^2(\Omega)}\geq(\|f^*\|_{L^2}+\varepsilon)z)\leq 2e^{-Cz^2},
		\end{align} 
		and
		\begin{align}
			\mathbb{P}(\|f_{\text{pod}}-f^*\|_{H^{-1}(\Omega)}\geq(\lambda^{1/4}\|f^*\|_{L^2}+\varepsilon)z)\leq 2e^{-Cz^2}.
		\end{align}
	\end{theorem}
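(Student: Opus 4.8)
The plan is to exploit that, once the denoised datum $m$ produced by \eqref{denoise} is fixed, every subsequent object — the adjoint snapshots, the adjoint-POD basis $\{\psi_i\}$, the reduced solver $\mathcal{S}_{\text{pod}}$, and the regularized reconstruction $f_{\text{pod}}$ — is a deterministic functional of $m$. Hence the randomness enters only through $m$, and it suffices to combine the Gaussian tail bound for the denoising step with a pathwise version of the error propagation already carried out in the preceding (expectation) theorem, keeping track of the deviation parameter $z$.

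First I would fix $z>0$ and introduce the good event
\[
A_z=\Bigl\{\,\|u^*(\cdot,T)-m\|_{L^2(\Omega)}\le \alpha^{1/2}\|u^*(\cdot,T)\|_{H^2(\Omega)}\,z\,\Bigr\}.
\]
The Gaussian concentration estimate for \eqref{denoise} (stated right after \eqref{error-denoise}) gives $\mathbb{P}(A_z^{c})\le 2e^{-Cz^2}$ with the very same constant $C$, since $A_z^c$ is precisely the event appearing there. Invoking the a priori bound $\|u^*\|_{C([0,T];H^2(\Omega))}\le C\|f^*\|_{L^2(\Omega)}$ from \cite{Chen-Zhang2021}, on $A_z$ we have $\|u^*(\cdot,T)-m\|_{L^2(\Omega)}\le C\alpha^{1/2}\|f^*\|_{L^2(\Omega)}z$. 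Because the inequalities to be proved are vacuous when $z$ is below a fixed threshold (there the right-hand side $2e^{-Cz^2}$ exceeds $1$), it is enough to argue for $z$ bounded away from $0$, which lets us freely absorb $ML^{4/d}\rho$ and $\alpha$ into $\lambda z^2$ below.

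Working on $A_z$, I would then re-run the deterministic derivation of \eqref{appro-adj-noise}: Theorem \ref{theorem-pod-app} controls $\|y_i-P_{\text{pod}}y_i\|_{L^2(\Omega)}$ for the exact-data POD basis in terms of $\rho$ and $\mu_L\le CL^{2/d}$, and the perturbation argument for the snapshot matrix $\tilde A$ (whose data perturbation is now the pathwise quantity $C\alpha^{1/2}\|f^*\|_{L^2(\Omega)}z$) yields, for every $1\le i\le M$,
\[
\|y_i-P_{\text{pod}}y_i\|_{L^2(\Omega)}^{2}\le C\bigl(ML^{4/d}\rho+\alpha z^{2}\bigr)\|f^*\|_{L^2(\Omega)}^{2}\le C\lambda z^{2}\|f^*\|_{L^2(\Omega)}^{2},
\]
using $\lambda=O(ML^{4/d}\rho+\alpha)$ and $z$ bounded below. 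With this pathwise POD-approximation bound, I would repeat verbatim the chain of estimates behind Theorem 4.1 of \cite{ZhangJCP2023} — the Fr\'echet-derivative iteration for $\mathcal{J}_{\text{pod}}$, the splitting of $\mathcal{S}_{\text{pod}}f_{\text{pod}}-\mathcal{S}f^*$ through $\mathcal{S}_{\text{pod}}f^*$, the standard Tikhonov estimates for the regularized minimizer, and the truncation error $\|f^*-f_{\text{app}}\|_{L^2(\Omega)}^{2}\le\varepsilon$ from \eqref{truncation-source} — but now as pathwise inequalities on $A_z$. Since every estimate in that chain is, after taking square roots, linear in the data and approximation errors, each such factor contributes exactly one power of $z$, producing on $A_z$
\[
\|\mathcal{S}_{\text{pod}}f_{\text{pod}}-\mathcal{S}f^*\|_{L^2(\Omega)}\le\bigl(\lambda^{1/2}\|f^*\|_{L^2(\Omega)}+\varepsilon\bigr)z,
\]
together with the companion bounds for $f_{\text{pod}}-f^*$ in $L^2(\Omega)$ (with the $\lambda^{0}$ prefactor) and in $H^{-1}(\Omega)$ (with the $\lambda^{1/4}$ prefactor, the gain coming — exactly as in the noise-free analysis — from interpolating between the $L^2$-range bound and the $L^2$-source bound via $\|v\|_{H^{-1}}^{2}\lesssim\|\mathcal{S}v\|_{L^2(\Omega)}\|v\|_{L^2(\Omega)}$, which still yields one power of $z$ since it is a geometric mean of two quantities each linear in $z$).

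Finally, since all three reconstruction bounds hold on $A_z$, passing to complements yields the three stated tail estimates with the same constant $C$ as in the denoising bound. I expect no genuine difficulty beyond bookkeeping: the one step that requires real care — and which I would regard as the main, if modest, obstacle — is verifying that the entire chain of estimates inherited from \cite{ZhangJCP2023} and from \eqref{appro-adj-noise} is a pointwise-in-$\omega$ implication valid on $A_z$ (so that the deterministic constants never interact with the Gaussian exponent) and that the $z$-dependence enters strictly linearly, so that no stray $z^{2}$ or $\log z$ factor degrades the sub-Gaussian tail. For the Gaussian-tail estimate of the denoising step itself I would simply cite the corresponding result in \cite{Chen-Zhang2021}.
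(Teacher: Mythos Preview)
Your proposal is correct and follows exactly the strategy the paper has in mind: the paper provides no proof of its own for this theorem, stating only that ``one can refer to \cite{Chen-Zhang2021} for a similar proof,'' and your outline---isolate the Gaussian tail event $A_z$ for the denoising step, then re-run the deterministic chain of estimates from Theorem~4.1 of \cite{ZhangJCP2023} and \eqref{appro-adj-noise} pathwise on $A_z$, tracking the linear dependence on $z$---is precisely that deferred argument. Your observations that small $z$ is handled trivially and that the $H^{-1}$ bound comes from the interpolation $\|v\|_{H^{-1}}^2\lesssim\|\mathcal{S}v\|_{L^2}\|v\|_{L^2}$ are the right bookkeeping points.
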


	\section{Parabolic backward problem}\label{sec:backward}
	
	For the backward problem of the parabolic equation, our goal is to recover the initial term $g(x)$, given the final time measurement $m=\mathcal{S}(g)=u(\cdot,T)$. In this case, $u$ satisfies the following equation: 
	\begin{equation}\label{backward}
		\left\{
		\begin{aligned}
			u_t +\mathcal{L}u &= 0 &\mbox{in } \Omega\times (0, T), \\
			u(x, t)&= 0  &\mbox{on } \partial \Omega\times (0, T),\\
			u(x, 0)&= g(x) &\mbox{in } \Omega\,,
		\end{aligned} 
		\right.
	\end{equation}
	
	Unlike the traditional POD method, we will get the snapshots from the following adjoint equation:
	\begin{equation}\label{backward-adjoint}
		\left\{
		\begin{aligned}
			\tilde u_t +\mathcal{L}\tilde u &= 0 &\mbox{in } \Omega\times (0, T), \\
			\tilde u(x, t)&= 0  &\mbox{on } \partial \Omega\times (0, T),\\
			\tilde u(x, 0)&= m(x) &\mbox{in } \Omega\,,
		\end{aligned} 
		\right.
	\end{equation}
	In this case, the snapshots are generated by solving the adjoint equation \eqref{backward-adjoint} with the given final time measurement $m(x)$ as the initial condition. 

    Repeat the standard procedure in section \ref{sec:source-POD},   
	we generate the new POD basis $\psi_k$s from the snapshots $\big\{ \Tilde{u}(\cdot, t_0), \Tilde{u}(\cdot, t_1), \ldots, \Tilde{u}(\cdot, t_M) \big\}$, where $t_k = k \Delta t$ with $\Delta t = \frac{T}{M}$ and $k = 0, \ldots, M$. Then we have the following error formula similar to Proposition \ref{Prop_PODError}:
	\begin{equation}\label{ini-SnapshotOfSolutions}		
		\frac{\sum_{i=1}^{2M + 1}\left|\left|\tilde y_i - \sum_{k=1}^{{N_{\text{pod}}}}\big(\tilde y_i,\psi_k(\cdot)\big)_{L^2(\Omega}\psi_k(\cdot)\right|\right|_{L^2(\Omega)}^{2}}{		 \sum_{i=1}^{2M+1}\left|\left|\tilde y_i\right|\right|_{L^2(\Omega)}^{2}} 
		= \rho,
	\end{equation} 
	where the number $N_{\text{pod}}$ is determined according to the decay of the ratio $\rho=\frac{\sum_{k={N_{\text{pod}}}+1}^{2M + 1}  \lambda_k}{\sum_{k=1}^{2M + 1}\lambda_k}$.

	We consider using these new POD basis functions to approximate the forward problem to accelerate the computation. The fully discrete scheme is constructed on $V_{\text{pod}}$ and the solution is denoted by $U_k$ for $k=1\cdots M$ with $ M=\frac{T}{\Delta t}$. To be precise, we seek numerical solutions $U_k$'s such that
	
	\begin{equation}\label{eqn:fully_discrete-ini}
		(\bar{\partial}U_k,\psi)+a(U_k,\psi)=0, \quad \forall \psi\in {V_{pod}},
	\end{equation}
	with $U_0=g(x)$.
	We define the solution operator from the ini term $g$ to the final time solution $U_M$ as $\mathcal{S}_{\text{pod}}$, i.e., $\mathcal{S}_{\text{pod}}g=U_M$.

	\subsection{Convergence of the adjoint-POD method}
	
	For any $g\in L^2(\Omega)$, we write $g=\sum_{k=1}^\infty g_k \phi_k$ for a set of coefficients $g_k$. Let $u=\sum_{k=1}^\infty u_k(t) \phi_k$ be the solution of the problem \eqref{source}. Substituting these two expressions of $g$ and $u$ into the first equation of \eqref{source}, we get by noting the fact that $L\phi_k=\mu_k\phi_k$ and comparing the coefficients of $\phi_k$ on both sides of the equation that $u_k(0)= g_k$ and 
	\begin{equation*}
		u'_k(t) + \mu_k u_k=0\ \ \ \ \mbox{in } ~(0, T)\,.
	\end{equation*}
	We can write the solution as $u_k(T)=\alpha_k\,g_k$, with $\alpha_k=e^{-\mu_kT}$.  Noting that $Sg=u(\cdot, T)=\sum^\infty_{k=1} u_k(T) \phi_k$, we can formally write 
	\begin{equation*}
		S\Big(\sum_{k=1}^\infty g_k \phi_k\Big)=  \sum_{k=1}^\infty \alpha_k g_k \phi_k.
	\end{equation*} 
	This representation shows the relationship between the initial condition $g$ and the solution $u(\cdot, T)$ at the final time $T$. The operator $S$ maps the initial condition to the solution at time $T$ through the coefficients $\alpha_k$, which depend on the eigenvalues $\mu_k$ of the operator $L$ and the final time $T$. This relationship can be used to analyze the properties of the solution and the backward problem.
	
	For simplicity, we approximate the source term $g(x)$ by a finite-dimensional truncation, i.e. \begin{equation}\label{truncation-source2}
		g_{\text{app}}=\sum_{k=1}^L g_k \phi_k.
	\end{equation}
	
	Then the solution $u(x,t)$ of the parabolic equation has the form: $u(\cdot, T)=\sum^L_{k=1} e^{-\mu_kT}f_k \phi_k$. After simple calculation, we can also have that $\Tilde{u}(x,t)=\sum^L_{k=1} e^{-\mu_kT}e^{-\mu_kt}f_k \phi_k$. Choosing the POD basis is to compute the  singular value decompositio of the matrix $\tilde A=(\tilde y_1,...,\tilde y_M)$, where $\tilde y_j=(\Tilde{u}(x_1,t_j),...,\Tilde{u}(x_N,t_j))^T$. Here $x_1,...,x_N$ are the finite element nodes in $\Omega$. Suppose $A$ has the singular value decomposition: $A=U\Sigma V$, then the $\psi_ks$ are exactly the first $M$ columns of $U$. 
	
	Denote $A=(y_1,...,y_M)$ and $\Tilde{A}=(\tilde y_1,...,\tilde y_M)$, the matrix $\Phi=(\phi_1(\vec x,t_1),...,\phi_L(\vec x,t_1))$, $F=\text{diag}(f_1,...,f_L)$ and $D=\text{diag}(e^{-\mu_1T},...,e^{-\mu_LT})$, and the $L\times M$ matrix $J$ with entries: $J(i,j)=e^{-\mu_it_j}$. Obviously, $\Phi$ is a column orthogonal matrix since the normal orthogonality of eigenfunctions $\phi_k$s. With the formulations of $u$ and $\tilde u$, we could represent the matrix $A$ and $\tilde A$ by:
	\begin{equation}
		A=\Phi F J, \quad \quad  \text{and} \quad \quad \tilde A=\Phi D F J.
	\end{equation}
	
	These matrix representations of $A$ and $\tilde A$ provide a compact way to express the relationship between the coefficients of the eigenfunctions $\phi_k$ and the solutions $u(x,t)$ and $\tilde u(x,t)$.

	Proposition \ref{Prop_PODError} shows that the low-rank space $V_{\text{pod}}$ is the best $N_{\text{pod}}$ rank approximation of the column space of $\tilde A$, we want to show that $V_{pod}$ is also a good approximation of the column space of $A$, which will certify the efficiency of the new POD method. First of all, let us show the connection of the matrices $A$ and $\tilde A$.
	
	\begin{lemma}\label{lemma-span-ini}
		If $L\leq M$, then $span\{y_1,...,y_M\}=span\{\tilde y_1,...,\tilde y_M\}$. 
		This means the column spaces of $A$ and $\tilde A$ are identical, i.e., $C(A)=C(\tilde A)$.
	\end{lemma}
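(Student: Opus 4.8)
The plan is to follow the proof of Lemma~\ref{lemma-span} almost verbatim, the only change being the explicit form of the matrix $J$. As there, I would start from the factorizations $A=\Phi F J$ and $\tilde A=\Phi D F J$ already recorded above, in which $\Phi$ has full column rank (its columns are orthonormal eigenfunctions), $F=\mathrm{diag}(g_1,\dots,g_L)$ is invertible, $D=\mathrm{diag}(e^{-\mu_1T},\dots,e^{-\mu_LT})$, and $J(i,j)=e^{-\mu_i t_j}$. Since $D$ and $F$ are diagonal (hence commute) and $\Phi F$ is left-cancellable, the equality $C(A)=C(\tilde A)$ reduces, exactly as in Lemma~\ref{lemma-span}, to exhibiting matrices $P,\tilde P\in\mathbb{R}^{M\times M}$ with $JP=D^{-1}J$ and $J\tilde P=DJ$.

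The core step is then to verify that the leading $L\times L$ block of $J$ is nonsingular; once this is known, its columns form a basis of $\mathbb{R}^L$, the corresponding block of $D^{\pm1}J$ also has full rank (because $D$ is invertible), and one reads off $P$ and $\tilde P$ column by column. Here the verification is cleaner than in Lemma~\ref{lemma-span}: writing $t_j=j\Delta t$ and $z_i:=e^{-\mu_i\Delta t}$, the $(i,j)$ entry of that block equals $z_i^{\,j}$, so the block factors as $\mathrm{diag}(z_1,\dots,z_L)$ times the ordinary Vandermonde matrix $(z_i^{\,j-1})_{1\le i,j\le L}$. As the eigenvalues $\mu_i$ are positive and (generically) distinct, the $z_i$ are distinct and strictly positive, so both factors are invertible and hence so is the block. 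In contrast with the source-problem case, no rank-one term $\mathbf e\mathbf e^{T}$ is present, so the auxiliary zero-counting argument on $\sum_j c_j e^{xt_j}$ used in Lemma~\ref{lemma-span} is unnecessary; and when $L=M$ one may simply take $P=J^{-1}D^{-1}J$ and $\tilde P=J^{-1}DJ$.

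The only point that needs a little care --- and the step I would flag as the main (mild) obstacle --- is the case of repeated eigenvalues, which I would handle exactly as in Lemma~\ref{lemma-span}: the leading block becomes a confluent Vandermonde matrix, and its nonsingularity is equivalent to the linear independence of the functions $t\mapsto t^{r}e^{-\mu_i t}$ over distinct values $\mu_i$ and exponents $0\le r<(\text{multiplicity of }\mu_i)$, which is standard. With the leading block of $J$ invertible in all cases, the identity $\operatorname{span}\{y_1,\dots,y_M\}=\operatorname{span}\{\tilde y_1,\dots,\tilde y_M\}$, i.e.\ $C(A)=C(\tilde A)$, follows at once, and nothing beyond this bookkeeping is required.
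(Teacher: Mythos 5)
Your proposal is correct and follows essentially the same route as the paper, whose entire proof is the one-line remark that one argues as in Lemma~\ref{lemma-span} "using the fact that the matrix $J$ is actually a Vandermonde matrix" --- your factorization of the leading $L\times L$ block as $\mathrm{diag}(z_1,\dots,z_L)$ times the ordinary Vandermonde matrix in the distinct nodes $z_i=e^{-\mu_i\Delta t}$ is exactly the detail that remark elides. Your observation that the rank-one perturbation and the zero-counting argument of Lemma~\ref{lemma-span} are no longer needed here is accurate, and the remaining bookkeeping matches the paper's intent.
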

	\begin{proof}
		The proof is similar to the Lemma \ref{lemma-span}, just using the fact that the matrix $J$ is actually a Vandermonde matrix.
	\end{proof}
	
	
	From \eqref{ini-SnapshotOfSolutions}, the new POD basis is a good approximation of $\{\tilde y_1,...,\tilde y_M\}$. With the above preparation, we will show the new POD basis is also a good approximation of $\{ y_1,..., y_M\}$.
	\begin{theorem}\label{theorem-pod-app-ini}
		Using the same notation in this section,  if one has enough snapshots, i.e. $L\leq M$, then 
		\begin{equation}\label{appro-adj-ini}		
			\frac{\sum_{i=1}^{M}\left|\left| y_i - P_{pod}y_i\right|\right|_{L^2(\Omega)}^{2}}{		 \sum_{i=1}^{M}\left|\left| y_i\right|\right|_{L^2(\Omega)}^{2}} 
			\leq C e^{2\mu_LT} \rho,
		\end{equation} 
		where $P_{pod}$ is the projection operator on the adjoint-POD space $span\{\psi_1,...,\psi_{N_{pod}}\}$ and  $\rho=\frac{\sum_{k={N_{pod}}+1}^{M}  \lambda_k}{\sum_{k=1}^{M}\lambda_k}$
	\end{theorem}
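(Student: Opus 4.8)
The plan is to reproduce, almost verbatim, the argument used for Theorem~\ref{theorem-pod-app}, the only structural change being that the diagonal matrix governing the change of basis is now $D = \mathrm{diag}(e^{-\mu_1 T},\dots,e^{-\mu_L T})$, so that $\|D^{-1}\|_2 = e^{\mu_L T}$ rather than a polynomial in $\mu_L$. As in that proof I would take $L = M$ for concreteness (the case $L < M$ being handled the same way on the relevant $L$-dimensional subspaces). By Lemma~\ref{lemma-span-ini} and the discussion preceding it, $A = \Phi F J$ and $\tilde A = \Phi D F J$ with $\Phi$ column-orthogonal, $F$ an invertible diagonal matrix, and $J(i,j) = e^{-\mu_i t_j}$ a Vandermonde matrix (in the nodes $e^{-\mu_i \Delta t}$), hence invertible once the $\mu_i$ are distinct --- the repeated-eigenvalue case being treated exactly as in Lemma~\ref{lemma-span}. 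Consequently $P := J^{-1} D^{-1} J$ is the unique matrix with $\tilde A P = A$, i.e. $y_j = \sum_{i=1}^{L} P_{ij}\,\tilde y_i$.

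From here I would run the same two estimates. First, Cauchy--Schwarz applied column by column gives $\|y_j - P_{pod} y_j\|^2 \le \big(\sum_i P_{ij}^2\big)\sum_i \|\tilde y_i - P_{pod}\tilde y_i\|^2$, and summing over $j$ yields
\begin{equation*}
\sum_{j=1}^{M}\|y_j - P_{pod} y_j\|^2 \;\le\; \|P\|_F^2 \sum_{i=1}^{M}\|\tilde y_i - P_{pod}\tilde y_i\|^2 \;=\; \|P\|_F^2\,\rho \sum_{i=1}^{M}\|\tilde y_i\|^2,
\end{equation*}
the last equality being the POD tail identity \eqref{ini-SnapshotOfSolutions} (Proposition~\ref{Prop_PODError}). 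Second, since $\|\Phi M\|_F = \|M\|_F$ for column-orthogonal $\Phi$ and $\|D\|_2 = \max_k e^{-\mu_k T} \le 1$,
\begin{equation*}
\sum_{j=1}^{M}\|\tilde y_j\|^2 = \|\Phi D F J\|_F^2 = \|D F J\|_F^2 \le \|D\|_2^2\,\|F J\|_F^2 \le \|F J\|_F^2 = \|\Phi F J\|_F^2 = \sum_{j=1}^{M}\|y_j\|^2 .
\end{equation*}
Dividing the first display by $\sum_i \|y_i\|^2$ and inserting the second gives the ratio bound $\le \|P\|_F^2\,\rho$.

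It remains to bound $\|P\|_F$. I would reuse the device from the proof of Theorem~\ref{theorem-pod-app}: the map $X \mapsto \|J X J^{-1}\|_2$ is a matrix norm, which on $P = J^{-1} D^{-1} J$ evaluates to $\|D^{-1}\|_2 = e^{\mu_L T}$; by equivalence of norms on the finite-dimensional space of $M\times M$ matrices, $\|P\|_F \le C\,e^{\mu_L T}$. Combining this with the previous paragraph produces $\sum_i \|y_i - P_{pod} y_i\|^2 \big/ \sum_i \|y_i\|^2 \le C e^{2\mu_L T}\rho$, which is \eqref{appro-adj-ini}.

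The proof is essentially routine once Theorem~\ref{theorem-pod-app} is in hand, so I do not expect a serious obstacle; the one point that deserves care is the norm-equivalence step $\|P\|_F \le C\|P\|_d$, whose constant in principle depends on $M$ (and on $J$) and should be acknowledged as such. Conceptually, the step worth emphasizing is that the factor $e^{2\mu_L T}$ --- in contrast to the polynomial factor $L^{4/d}$ of the source problem --- reflects the \emph{exponential ill-posedness} of the backward heat equation, so that the bound is informative only when the truncation level $L$ (equivalently $\mu_L$) is kept small enough relative to the POD tail $\rho$; this is precisely the trade-off to be balanced when choosing $L$ and $N_{pod}$ in the convergence analysis of the backward inverse problem.
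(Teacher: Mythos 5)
Your proposal reproduces the paper's own proof essentially step for step: the same factorization $P=J^{-1}D^{-1}J$ from Lemma \ref{lemma-span-ini}, the same column-wise Cauchy--Schwarz estimate, the same matrix-norm device giving $\|P\|_F\leq C\|D^{-1}\|_2=Ce^{\mu_L T}$, and the same Frobenius-norm comparison of $\sum_j\|\tilde y_j\|^2$ with $\sum_j\|y_j\|^2$. Your added caveat about the norm-equivalence constant depending on $M$ and $J$ is a fair observation that the paper itself glosses over, but the argument is the same.
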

	\begin{proof}
		In the following proof, we assume $L=M$ for simplicity. For the case $L<M$, the proof is similar. 
		
		Using the same notation of Lemma \ref{lemma-span-ini}, $\Phi$ and $J$ are both invertible square matrices. Then, there exists a unique matrix $P$ such that,
		$$\Phi D F J P=\Phi F J,$$
		and $P=J^{-1}D^{-1}J$. 
		
		Hence $$y_j=\sum^L_{i=1}P_{ij}\tilde y_i.$$
		
		Then by Cauchy-Schwarz inequality, for any $1\leq j\leq L$, we have the estimate 
		\begin{align}
			\|y_j-P_{\text{pod}}y_j\|^2\leq \sum_{i=1}^L P^2_{ij}\sum_{i=1}^L\|\tilde y_i-P_{\text{pod}} \tilde y_i\|^2.
		\end{align}
		Hence,
		\begin{align}
			\sum_{j=1}^L\|y_j-P_{\text{pod}}y_j\|^2\leq \sum_{i,j=1}^L P^2_{ij}\sum_{i=1}^L\|\tilde y_i-P_{\text{pod}} \tilde y_i\|^2=\|P\|^2_F\sum_{i=1}^L\|\tilde y_i-P_{\text{pod}} \tilde y_i\|^2.
		\end{align}
		The rest is to estimate the Frobenius norm of $P$. Since $P=J^{-1}D^{-1}J$, we define $\|P\|_d=\|D^{-1}\|_2$. It is easy to verify that $\|\cdot\|_d$ is a matrix norm. Then,
		$$\|P\|_F\leq C \|P\|_d= C \|D^{-1}\|_2\leq C e^{\mu_LT}.$$
		
		On the other hand, since $\Phi$ is an orthogonal matrix, we have 
		\begin{align}
			\sum_{j=1}^L\|\tilde y_j\|^2&=\|\Phi D F J \|_F^2=\|D F J\|_F^2 \leq \|D \|_F^2\|F J\|_F^2\\
			&\leq  C  \|F J\|_F^2 \\
			&= \leq  C \sum_{j=1}^L\|y_j\|^2.
		\end{align}
		
		Alighed with \eqref{ini-SnapshotOfSolutions}, we finally have,
		\begin{align}
			\frac{\sum_{i=1}^{M}\left|\left| y_i - P_{pod}y_i\right|\right|_{L^2(\Omega)}^{2}}{		 \sum_{i=1}^{M}\left|\left| y_i\right|\right|_{L^2(\Omega)}^{2}} &\leq C \|P\|^2_F \frac{\sum_{i=1}^{M}\left|\left| \tilde y_i - P_{pod}\tilde y_i\right|\right|_{L^2(\Omega)}^{2}}{		 \sum_{i=1}^{M}\left|\left| \tilde y_i\right|\right|_{L^2(\Omega)}^{2}}\\
			&\leq C e^{2\mu_LT} \rho.
		\end{align}
		
		The conclusion comes with the estimation $\mu_i\leq C i^{2/d}$. 
	\end{proof}
	
	\subsection{Convergence of backward problem} \label{sec:convergence-ini}
	
	To solve this inverse problem, we use the traditional Tikhonov regularization method, 
	\begin{align}\label{general-regularization-ini}
		\mathop {\rm min}\limits_{g \in X}         \|\mathcal{S}(g)-m\|_{L^2(\Omega)}^2+\lambda \|g\|_{L^2(\Omega)}^2.
	\end{align}
	
	In the general discrete approximation  of the problem \eqref{general-regularization-ini}, we solve the following least-squares regularized optimization problem:
	
	\begin{align}\label{disc-ini}
		\mathop {\rm min}\limits_{g\in V_{\text{pod}}}         \|\mathcal{S}_{\text{pod}}(g)-m\|_{L^2(\Omega)}^2+\lambda \|g\|_{L^2(\Omega)}^2.
	\end{align}
	
	But in the traditional setting of the POD method, one has to know the source term $f$ and the initial condition $g$ first to derive the snapshots to get the POD basis functions, but the only information known in inverse problems is the measurement $m(x)$. This is called inverse crime which makes this method impossible to implement in practice.

	Define the functional $\mathcal{J}_{\text{pod}}[g]=\|\mathcal{S}_{\text{pod}}g-m\|_{L^2(\Omega)}^2+\lambda \|g\|_{L^2(\Omega)}^2$. We can compute the 
	Fr$\acute{e}$chet derivative of $\mathcal{J}_{\text{pod}}[g]$ and obtain the following iterative scheme:
	
	\begin{align}
		g_{k+1}=g_{k}-\beta d\mathcal{J}_{pod}[g_{k}], \quad \forall k\in \mathbb{N},
	\end{align}
	where $\beta$ is the step size, $d\mathcal{J}_{\text{pod}}[g]=\mathcal{S}_{\text{pod}}^*(\mathcal{S}_{\text{pod}}g-m)+ \lambda f$, and $g_{0}$ is an initial guess. 
	
	The above theory is based on noise free case, i.e. the final time measurement $m=u(\cdot,T)$ is exactly known. For practical consideration, the measurement data always contains uncertainty.  We assume the measurement data is always blurred by noise and takes the discrete form $m^n_i=u(d_i,T)+e_i$, $i=1, \cdots, n$, where $d_i$s are the positions of detectors and $\{e_i\}^n_{i=1}$ are independent and identically distributed (i.i.d.) random variables on a proper probability space ($\mathfrak{X},\mathcal{F},\mathbb{P})$. From property of paraboic equation, we know that $\|u\|_{C([0,T];H^2(\Omega))}\le C\|g\|_{L^2(\Omega)}$. 
	According to the embedding theorem of Sobolev spaces, we know that $H^2(\Omega)$ is continuously embedded into $C(\bar\Omega)$ so that  $u(\cdot,T)$ is well defined point-wisely for all $d_i\in \Omega$. 
	
	We will first use the technique developed in \cite{Chen-Zhang} to recover the final time measurement $u(\cdot,T)$ from the noisy data $m^n_i$ for $i=1,...,n$. We approximate $u(\cdot,T)$ by solving the following minimization problem:
	
	\begin{align}\label{denoise-ini}
		m=\mathop {\rm argmin}\limits_{u \in X} \frac{1}{n}\sum_{i=1}^n(u(x_i)-m^n_i)^2+\alpha |u|_{H^2(\Omega)}^2.
	\end{align}
	
	Assume the point wise noise $e_i$ has bounded variance $\sigma$, this is the so-called noise level. \cite{Chen-Zhang}analyzed this problem and give the optimal convergence results, moreover they proposed a posteriori algorithm to give the best approximation without knowing the true solution $m$ and noise level $\sigma$. Here we list their main results:
	If one chooses the optimal regularization parameter
	$$\alpha^{1/2+d/8}=O(\sigma n^{-1/2} \|u(\cdot,T)\|^{-1}_{H^2(\Omega)}),$$
	then the solution $m$ of \eqref{denoise-ini} achieves the optimal convergence
	\begin{equation}\label{error-denoise-ini}
		\mathbb{E}\big[ \|u(\cdot,T)-m\|_{L^2(\Omega)}\big] \leq C \alpha^{1/2}\|u(\cdot,T)\|_{H^2(\Omega)}.
	\end{equation} 
	
	And if the noise $\{e_i\}^n_{i=1}$ are independent Gaussian random variables with variance $\sigma$, we further have, 
	
	\begin{equation}
		\mathbb{P}( \|u(\cdot,T)-m\|_{L^2(\Omega)} \geq \alpha^{1/2}\|u(\cdot,T)\|_{H^2(\Omega)}z) \leq 2e^{-Cz^2}.
	\end{equation} 
	
	Using this recovered function $m(x)$, we generate the adjoint POD basis in Section \ref{sec-adj}. It can be easily shown that, with uncertainty, the POD basis is still a good low rank approximation of the snapshots $\{y_1,...,y_M\}$. Combining the Theorem \ref{theorem-pod-app-ini} and \eqref{error-denoise-ini}, we shall have that for any $1\leq i \leq M$,

	\begin{equation}\label{appro-adj-noise2}		
		\left|\left| y_i - P_{\text{pod}}y_i\right|\right|_{L^2(\Omega)}^{2}
		\leq C (M e^{2\mu_L T} \rho +\alpha) \|g\|^2_{L^2(\Omega)},
	\end{equation} 
	
	Since we replace the source term by a finite truncation \eqref{truncation-source2}, we assume 
	\begin{equation}
		\|g-g_{\text{app}}\|^2_{L^2}\leq \varepsilon(L).
	\end{equation}
	With those results, using a similar technique to prove the Theorem 3.7 in \cite{zhang-zhang-arxiv2023}, we have the following convergence results. 
	\begin{theorem}
		Let $\{e_i\}^n_{i=1}$ be independent random variables satisfying $\mathbb{E}[e_i]=0$ and $\mathbb{E}[e^2_i]\leq \sigma^2$ for $i=1,\cdots, n$. Set $\alpha^{1/2+d/8}=O(\sigma n^{-1/2} \|u(\cdot,T)\|^{-1}_{H^2(\Omega)})$ in \eqref{denoise}, and $\lambda =O(M e^{2\mu_L T} \rho +\alpha)$, then 
		\begin{align}\label{pod1-2}
			\mathbb{E}\big[\|\mathcal{S}g^*- \mathcal{S}_{\text{pod}}g_{\text{pod}}\|_{L^2(\Omega)}^2\big]\leq C\lambda\|g^*\|^2_{L^2(\Omega)} +C\varepsilon,
		\end{align}
		
		\begin{align}
			\mathbb{E}\big[\|g^*- g_{\text{pod}}\|_{L^2(\Omega)}^2\big]\leq C\|g^*\|^2_{L^2(\Omega)}+C\varepsilon.
		\end{align}
		
	\end{theorem}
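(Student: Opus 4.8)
The argument is the backward-problem analogue of Theorem~4.1 in \cite{ZhangJCP2023} (see also Theorem~3.7 in \cite{zhang-zhang-arxiv2023}): the plan is to play the minimality of $g_{\text{pod}}$ in \eqref{disc-ini} against a well-chosen competitor. Let $g_{\text{app}}=\sum_{k=1}^{L}g^*_k\phi_k$ be the $L$-mode truncation of the true initial datum $g^*$, so $\|g^*-g_{\text{app}}\|_{L^2(\Omega)}^2\le\varepsilon$, and let $P_{\text{pod}}g_{\text{app}}\in V_{\text{pod}}$ be its orthogonal projection onto the adjoint-POD space. Since $g_{\text{pod}}$ minimises $\mathcal{J}_{\text{pod}}$ over $V_{\text{pod}}$,
\begin{equation}
\|\mathcal{S}_{\text{pod}}g_{\text{pod}}-m\|_{L^2(\Omega)}^2+\lambda\|g_{\text{pod}}\|_{L^2(\Omega)}^2\le\|\mathcal{S}_{\text{pod}}(P_{\text{pod}}g_{\text{app}})-m\|_{L^2(\Omega)}^2+\lambda\|P_{\text{pod}}g_{\text{app}}\|_{L^2(\Omega)}^2 ,
\end{equation}
so the whole proof reduces to bounding the right-hand side in expectation.

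The regulariser term is trivial: $\|P_{\text{pod}}g_{\text{app}}\|_{L^2(\Omega)}\le\|g_{\text{app}}\|_{L^2(\Omega)}\le\|g^*\|_{L^2(\Omega)}$. For the data-fit term I would split
\begin{equation}
\mathcal{S}_{\text{pod}}(P_{\text{pod}}g_{\text{app}})-m=\big[\mathcal{S}_{\text{pod}}(P_{\text{pod}}g_{\text{app}})-\mathcal{S}g^*\big]+\big[\mathcal{S}g^*-m\big].
\end{equation}
The second bracket is the denoising error: by \eqref{error-denoise-ini} and the parabolic regularity estimate $\|u\|_{C([0,T];H^2(\Omega))}\le C\|g^*\|_{L^2(\Omega)}$ one gets $\mathbb{E}\,\|\mathcal{S}g^*-m\|_{L^2(\Omega)}^2\le C\alpha\|g^*\|_{L^2(\Omega)}^2$. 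The first bracket I would decompose further into the truncation error $\mathcal{S}g^*-\mathcal{S}g_{\text{app}}$ (bounded by $\|g^*-g_{\text{app}}\|_{L^2(\Omega)}\le\sqrt\varepsilon$, since the solution operator of \eqref{backward} is a contraction on $L^2(\Omega)$), the POD projection error $\mathcal{S}g_{\text{app}}-P_{\text{pod}}\mathcal{S}g_{\text{app}}$, and the reduced-Galerkin consistency error $\mathcal{S}_{\text{pod}}(P_{\text{pod}}g_{\text{app}})-P_{\text{pod}}\mathcal{S}g_{\text{app}}$. The last two are controlled by \eqref{appro-adj-noise2} (equivalently Theorem~\ref{theorem-pod-app-ini} together with the denoising estimate): the adjoint-POD space reproduces the whole exact trajectory $\{y_1,\dots,y_M\}$ — which contains both $g_{\text{app}}$ and $\mathcal{S}g_{\text{app}}$ — up to the factor $Me^{2\mu_L T}\rho+\alpha$ times $\|g^*\|_{L^2(\Omega)}^2$, while a standard POD-Galerkin energy estimate for \eqref{eqn:fully_discrete-ini} (test with $U_k$, use the coercivity of $a(\cdot,\cdot)$, sum over $k$) bounds the consistency error by the same projection error, with the time-stepping contribution absorbed for $\Delta t$ small. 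Collecting these and invoking the prescribed choices $\alpha^{1/2+d/8}=O(\sigma n^{-1/2}\|u(\cdot,T)\|_{H^2(\Omega)}^{-1})$ and $\lambda=O(Me^{2\mu_L T}\rho+\alpha)$, the right-hand side of the minimality inequality is $\le C\lambda\|g^*\|_{L^2(\Omega)}^2+C\varepsilon$ in expectation.

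Reading off the two nonnegative pieces then gives $\mathbb{E}\,\|\mathcal{S}_{\text{pod}}g_{\text{pod}}-m\|_{L^2(\Omega)}^2\le C\lambda\|g^*\|_{L^2(\Omega)}^2+C\varepsilon$ and $\lambda\,\mathbb{E}\,\|g_{\text{pod}}\|_{L^2(\Omega)}^2\le C\lambda\|g^*\|_{L^2(\Omega)}^2+C\varepsilon$. For \eqref{pod1-2} I would write $\mathcal{S}g^*-\mathcal{S}_{\text{pod}}g_{\text{pod}}=(\mathcal{S}g^*-m)+(m-\mathcal{S}_{\text{pod}}g_{\text{pod}})$, square, take expectations, and bound the first summand once more by the denoising estimate. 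For the second assertion, the triangle inequality gives $\mathbb{E}\,\|g^*-g_{\text{pod}}\|_{L^2(\Omega)}^2\le 2\|g^*\|_{L^2(\Omega)}^2+2\,\mathbb{E}\,\|g_{\text{pod}}\|_{L^2(\Omega)}^2\le C\|g^*\|_{L^2(\Omega)}^2+C\varepsilon/\lambda$, which is the stated bound once the harmless factor $\lambda^{-1}$ is absorbed into $\varepsilon$ under the standing convention (as in the inverse-source theorem and \cite{ZhangJCP2023}) that the truncation level $L$ is chosen large enough that $\varepsilon=O(\lambda)$.

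The only genuinely substantive step — everything else being the triangle inequality plus cited estimates — is the control of the first bracket in the middle paragraph: one must verify that the adjoint-POD basis, assembled from the \emph{noisy} denoised measurement $m$ rather than from the exact solution, still approximates the exact forward snapshots to order $Me^{2\mu_L T}\rho+\alpha$, and must couple this with a reduced-basis stability bound for \eqref{eqn:fully_discrete-ini} whose constant is independent of $N_{\text{pod}}$. The first half is exactly \eqref{appro-adj-noise2}, whose proof rests on Theorem~\ref{theorem-pod-app-ini} and Lemma~\ref{lemma-span-ini}; the second is the routine parabolic Galerkin argument. A point to watch is the exponential weight $e^{2\mu_L T}$: for the bound to be informative the snapshot count, the truncation level $L$ and the POD tail $\rho$ must be coupled so that $Me^{2\mu_L T}\rho\to0$, but this is a condition on the discretisation regime rather than an obstruction to the proof.
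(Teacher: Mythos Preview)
The paper does not actually prove this theorem: after assembling \eqref{appro-adj-noise2} and the truncation estimate $\|g-g_{\text{app}}\|_{L^2}^2\le\varepsilon$, it simply writes ``using a similar technique to prove the Theorem~3.7 in \cite{zhang-zhang-arxiv2023}, we have the following convergence results'' and states the two bounds. So there is nothing to compare against beyond that citation.

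Your sketch is precisely the standard Tikhonov minimality-versus-competitor argument that those cited results use, and the ingredients you invoke---the denoising bound \eqref{error-denoise-ini}, the POD approximation \eqref{appro-adj-noise2}, the $L^2$-contractivity of $\mathcal{S}$, and a Galerkin stability estimate for \eqref{eqn:fully_discrete-ini}---are the right ones and are all supplied in the paper. The decomposition you give for the data-fit term is essentially the same as in \cite{ZhangJCP2023} and \cite{zhang-zhang-arxiv2023}. One honest caveat you already flag: reading off $\lambda\|g_{\text{pod}}\|^2$ from the minimality inequality yields $\mathbb{E}\|g_{\text{pod}}\|^2\le C\|g^*\|^2+C\varepsilon/\lambda$, not $+C\varepsilon$, so the second assertion as stated implicitly requires $\varepsilon=O(\lambda)$; the paper is silent on this point, and your remark that $L$ must be tied to $\lambda$ is the correct way to close the gap.
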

	
	Furthermore, if we assume the noise $\{e_i\}^n_{i=1}$ are independent Gaussian random variables with variance $\sigma$, we will have a stronger type of convergence, one can refer to \cite{zhang-zhang-arxiv2023} for a similar proof. We just list the results here.
	
	\begin{theorem}
		Let $\{e_i\}^n_{i=1}$ be independent Gaussian random variables with variance $\sigma$. Set $\alpha^{1/2+d/8}=O(\sigma n^{-1/2} \|u(\cdot,T)\|^{-1}_{H^2(\Omega)})$ in \eqref{denoise}, and $\lambda =O(M e^{2\mu_L T} \rho +\alpha)$, then there exists a constant C, for any $z>0$, 
		\begin{align}
			\mathbb{P}(\|S_{\text{pod}}g_{\text{pod}}-Sg^*\|_{L^2(\Omega)}\geq(\lambda^{1/2}\|g^*\|_{L^2}+\varepsilon)z)\leq 2e^{-Cz^2},
		\end{align}
		
		\begin{align}
			\mathbb{P}(\|g_{\text{pod}}-g^*\|_{L^2(\Omega)}\geq(\|g^*\|_{L^2}+\varepsilon)z)\leq 2e^{-Cz^2}.
		\end{align}
		
	\end{theorem}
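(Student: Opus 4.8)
The plan is to reuse the deterministic skeleton that underlies the expectation estimates and to promote the single probabilistic ingredient — the denoising error $\|u(\cdot,T)-m\|_{L^2(\Omega)}$ — from a second-moment bound to the sub-Gaussian tail bound already quoted above: for Gaussian noise, $\mathbb{P}\bigl(\|u(\cdot,T)-m\|_{L^2(\Omega)}\ge\alpha^{1/2}\|u(\cdot,T)\|_{H^2(\Omega)}z\bigr)\le 2e^{-Cz^2}$ (cf. \cite{Chen-Zhang}). The first step is to extract a purely pathwise inequality, valid for every realization of $\{e_i\}_{i=1}^n$, of the form
\[
\|\mathcal{S}_{\text{pod}}g_{\text{pod}}-\mathcal{S}g^*\|_{L^2(\Omega)}\le C\bigl(\lambda^{1/2}\|g^*\|_{L^2(\Omega)}+\varepsilon+\|u(\cdot,T)-m\|_{L^2(\Omega)}\bigr),
\]
together with the companion estimate $\|g_{\text{pod}}-g^*\|_{L^2(\Omega)}\le C\bigl(\|g^*\|_{L^2(\Omega)}+\varepsilon+\lambda^{-1/2}\|u(\cdot,T)-m\|_{L^2(\Omega)}\bigr)$. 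Both are obtained as in the proof of Theorem 3.7 of \cite{zhang-zhang-arxiv2023}: one invokes the minimality $\mathcal{J}_{\text{pod}}[g_{\text{pod}}]\le\mathcal{J}_{\text{pod}}[P_{\text{pod}}g_{\text{app}}]$ with the competitor $P_{\text{pod}}g_{\text{app}}$, where $g_{\text{app}}=\sum_{k=1}^L g_k\phi_k$ is the truncation of $g^*$; expands the squares; and controls the three resulting pieces separately — the POD-consistency error $\|\mathcal{S}_{\text{pod}}P_{\text{pod}}g_{\text{app}}-\mathcal{S}g^*\|_{L^2(\Omega)}$ by Theorem \ref{theorem-pod-app-ini} and \eqref{appro-adj-noise2} (this is where the factor $Me^{2\mu_LT}\rho+\alpha$, i.e. $\lambda$, enters), the truncation error by $\|g^*-g_{\text{app}}\|_{L^2(\Omega)}^2\le\varepsilon$, and the data misfit by $\|\mathcal{S}g^*-m\|_{L^2(\Omega)}=\|u(\cdot,T)-m\|_{L^2(\Omega)}$. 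The lower bound $\mathcal{J}_{\text{pod}}[g_{\text{pod}}]\ge\lambda\|g_{\text{pod}}\|_{L^2(\Omega)}^2$ then controls $\|g_{\text{pod}}\|_{L^2(\Omega)}$, and the triangle inequality gives the bound on $\|g_{\text{pod}}-g^*\|_{L^2(\Omega)}$.

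With the pathwise inequalities in hand, the probabilistic conclusions follow mechanically. Using $\lambda=O(Me^{2\mu_LT}\rho+\alpha)\ge c\alpha$ and the regularity bound $\|u(\cdot,T)\|_{H^2(\Omega)}\le C\|g^*\|_{L^2(\Omega)}$, the event $\{\|u(\cdot,T)-m\|_{L^2(\Omega)}\le\alpha^{1/2}\|u(\cdot,T)\|_{H^2(\Omega)}z\}$ forces $\|u(\cdot,T)-m\|_{L^2(\Omega)}\le C\lambda^{1/2}\|g^*\|_{L^2(\Omega)}z$. On this event the first pathwise inequality yields $\|\mathcal{S}_{\text{pod}}g_{\text{pod}}-\mathcal{S}g^*\|_{L^2(\Omega)}\le C(\lambda^{1/2}\|g^*\|_{L^2(\Omega)}+\varepsilon)z$, and the second yields $\|g_{\text{pod}}-g^*\|_{L^2(\Omega)}\le C(\|g^*\|_{L^2(\Omega)}+\varepsilon)z$, because $\lambda^{-1/2}\alpha^{1/2}\le C$. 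Passing to complements and absorbing $C$ into the exponent $e^{-Cz^2}$ produces the two stated bounds.

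The main obstacle is the bookkeeping in the ill-posed direction: since the backward heat equation has no stable inverse, the estimate on $\|g_{\text{pod}}-g^*\|_{L^2(\Omega)}$ carries the amplification factor $\lambda^{-1/2}$ in front of the denoising error, and one must check that the prescribed coupling — $\alpha^{1/2+d/8}=O(\sigma n^{-1/2}\|u(\cdot,T)\|_{H^2(\Omega)}^{-1})$ and $\lambda\gtrsim\alpha$ — keeps $\lambda^{-1/2}\alpha^{1/2}\|u(\cdot,T)\|_{H^2(\Omega)}\le C\|g^*\|_{L^2(\Omega)}$, so that the amplification does not destroy the sub-Gaussian tail. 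The other point requiring care is the derivation of the pathwise inequality itself: the expansion of $\mathcal{J}_{\text{pod}}$ must be arranged so that the noisy datum $m$ enters only through the single linear term $\|u(\cdot,T)-m\|_{L^2(\Omega)}$, for otherwise the transfer of the concentration bound from \cite{Chen-Zhang} to $g_{\text{pod}}$ — which is a nonlinear, argmin-defined functional of $m$ — would not be legitimate.
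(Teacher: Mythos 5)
Your outline matches the paper's intended argument: the paper does not actually write out a proof of this theorem but defers to Theorem 3.7 of \cite{zhang-zhang-arxiv2023}, and your reconstruction --- a pathwise bound obtained from the minimality of $\mathcal{J}_{\text{pod}}$ at $g_{\text{pod}}$ against the competitor $P_{\text{pod}}g_{\text{app}}$, arranged so the noise enters only through $\|u(\cdot,T)-m\|_{L^2(\Omega)}$ and then promoted to a tail bound via the Gaussian concentration estimate of \cite{Chen-Zhang} together with $\lambda\gtrsim\alpha$ and $\|u(\cdot,T)\|_{H^2(\Omega)}\le C\|g^*\|_{L^2(\Omega)}$ --- is precisely that argument. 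The only blemish is a bookkeeping issue the paper's own statement already inherits: since the truncation error is defined by $\|g^*-g_{\text{app}}\|_{L^2(\Omega)}^2\le\varepsilon$, the truncation contribution to the norm bounds should read $\varepsilon^{1/2}$ rather than $\varepsilon$, and the $\|g_{\text{pod}}-g^*\|_{L^2(\Omega)}$ estimate also carries a $\lambda^{-1/2}\varepsilon^{1/2}$ term that you (and the paper) silently absorb.
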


\section{Numerical examples}
In this section, we present several numerical examples to demonstrate the reconstruction results for the inverse source problem and the backward problem discussed in this paper. We consider the domain $\Omega=[0,\pi]^2$. For each observation data set, we first apply the backward Euler scheme in time and the linear finite element method (specifically, the P1 element) in space with a mesh size of $h=1/50$ and a time step of $\Delta t=T/400$. We select 9 POD basis functions to compute the inverse problems, whereas, for the finite element method, there are approximately 2500 basis functions.

In \cite{ZhangJCP2023}, the authors have already compared the efficiency of the POD method and the finite element method for solving this inverse problem. They demonstrate that the POD method achieves a speed-up of at least 6 times, even with 400 finite element basis functions. Therefore, we do not include a comparison with the finite element method in this paper. As the number of finite element basis functions increases, the potential for the POD method to achieve greater speed-up also grows correspondingly.

 \subsection{Inverse source examples}
	
	In the following examples, we apply the adjoint POD method to recover the source term $f$ as described in Section \ref{sec:source}.  We obtain the data for the forward problem with the exact source term $f$ at the final time $T=1$.


    \textbf{Example 4.1} We first demonstrate the importance of choosing the appropriate POD basis functions. For the same source term, we apply different right-hand sides of equation \eqref{source} to obtain the POD basis functions. Subsequently, we solve the inverse source problem using these different POD basis functions. 
    
    Figure \ref{right POD basis} illustrates this process. The true source term is given by $\sin(2x)\sin(2y)e^{\frac{x+y}{\pi}}$, and its surface plot is shown in Figure \ref{right POD basis} (a). Figure \ref{right POD basis} (b) presents the reconstruction result obtained using the adjoint POD method proposed in this paper, indicating that our new method effectively recovers the source term in an efficient manner. Figure \ref{right POD basis} (c) displays the result when an incorrect right-hand side is used to generate the POD basis functions. In this case, we use $\sin(x)\sin(y)$ as the right-hand side in \eqref{source} to generate the POD basis functions. Figure \ref{right POD basis} (d) shows the result when we use an A-shaped function as the right-hand side to derive the POD basis. 

      As can be seen, Figure \ref{right POD basis} (b) provides a good reconstruction, whereas Figures \ref{right POD basis} (c) and (d) yield inaccurate results. The result in Figure \ref{right POD basis} (d) is particularly striking, as the recovered image deviates significantly from the exact source term.

      In this example, we demonstrate the importance of selecting the appropriate basis functions for solving inverse problems. Utilizing an unsuitable set of basis functions can lead to wrong results. Our proposed adjoint POD method offers a set of suitable basis functions for such problems. In the following examples, we will compare our adjoint POD basis functions with the original true POD basis to validate Theorem \ref{theorem-pod-app}.

	\begin{figure}[htb]
		\centering
		\subfigure[Exact source term]{\includegraphics[width=0.32\linewidth]{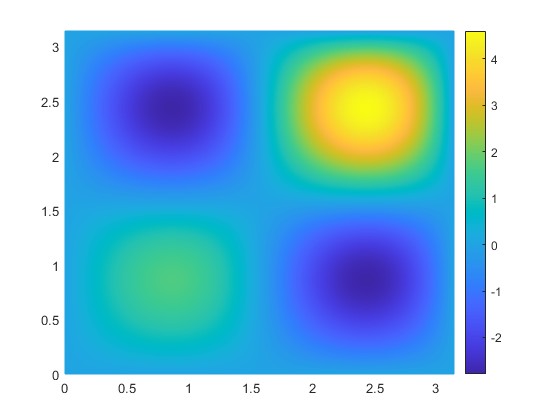}}
        \subfigure[Reconstructed by the Adjoint POD method]{\includegraphics[width=0.32\linewidth]{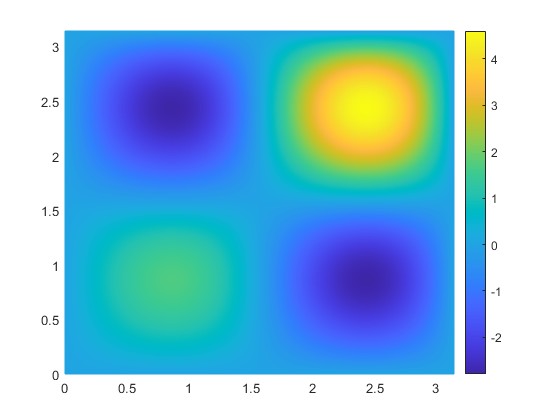}}\\
		\subfigure[Using the POD basis generated with the right hand side $\sin(x)\sin(y)$ ]{\includegraphics[width=0.32\linewidth]{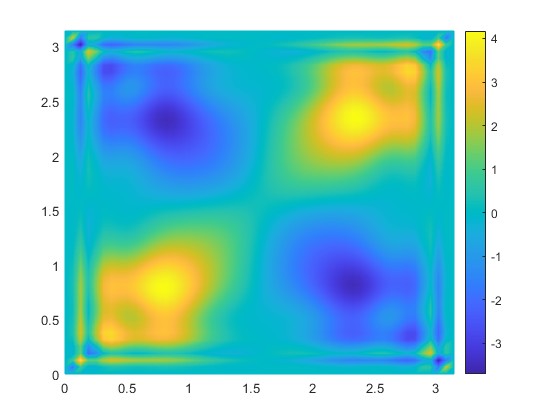}}	
		\subfigure[Using the POD basis generated with the right hand side of A shaped function ]{\includegraphics[width=0.32\linewidth]{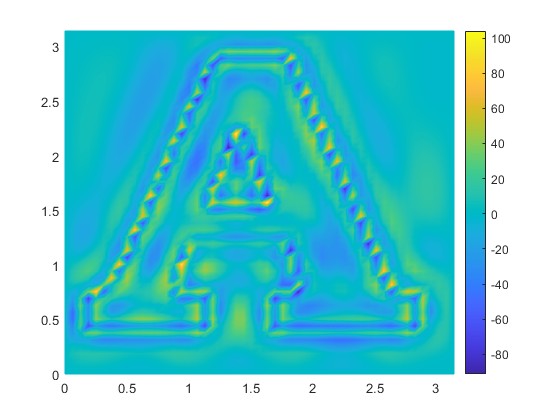}}
		\caption{The importance of choice of POD basis  }\label{right POD basis}
	\end{figure}



    \textbf{Example 4.2} In this example, we will first use the true source term as the right-hand side in \eqref{source} to generate the POD basis. Then, we will generate the POD basis using our proposed adjoint POD method. To validate Theorem \ref{theorem-pod-app}, we will compare both sets of basis functions and assess their similarity. Figure \ref{basis-comp-source-sin} shows the results when using the exact source term $f^*=\sin(2x)\sin(2y)$. Figure \ref{basis-comp-source-sin} (b) and Figure \ref{basis-comp-source-sin} (d) show that both the traditional POD and our adjoint POD work well to recover the true source term. However, our adjoint POD method does not require prior knowledge of the exact source term, while the traditional POD method does, leading to the so-called inverse crime. The basis functions for each method are depicted in Figure \ref{basis-comp-source-sin} (c) and Figure \ref{basis-comp-source-sin} (e), demonstrating that our adjoint POD basis is highly similar to the traditional one, even though we derived it solely from measured data. 
    
    Figure \ref{basis-comp-source-Z} presents the results when using an exact source term $f^*$ in the form of a Z-shaped function. This example also illustrates the efficiency of the POD method in solving inverse problems compared to the finite element method. Figure \ref{basis-comp-source-Z} (c) and Figure \ref{basis-comp-source-Z} (e) show that both the basis functions of the traditional POD method and our adjoint POD method contain the critical information of the exact source term that we aim to recover. In contrast, the basis functions of finite element method do not contain any prior information about the true function we need to recover.
 	
	\begin{figure}[htb]
		\flushright
		\subfigure[Exact source term]{\includegraphics[width=0.32\linewidth]{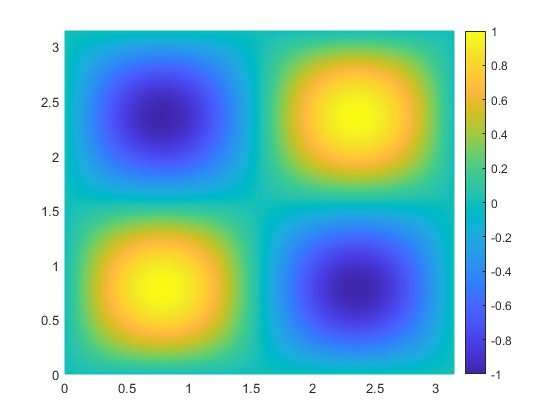}}
		\subfigure[Recovered result by traditional POD with exact right hand side ]{\includegraphics[width=0.32\linewidth]{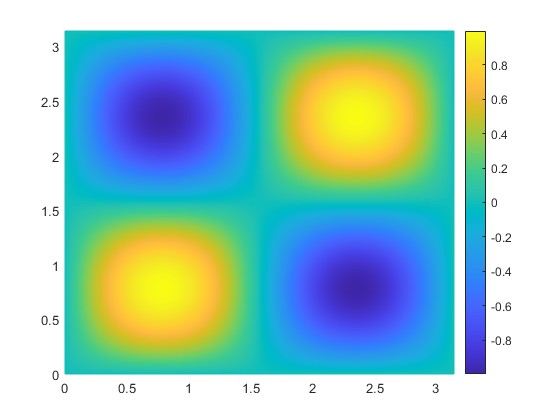}}	
		\subfigure[Basis of the traditional POD]{\includegraphics[width=0.32\linewidth]{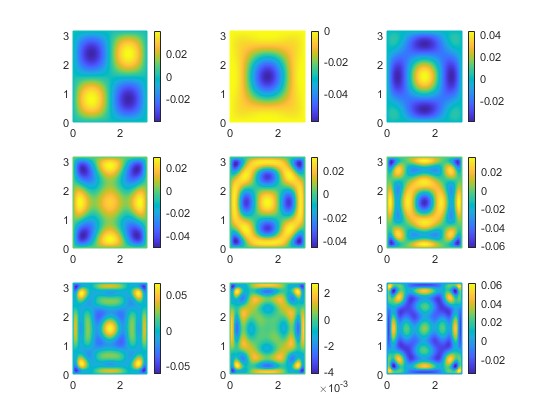}}
		\subfigure[Result by the adjoint POD]{\includegraphics[width=0.32\linewidth]{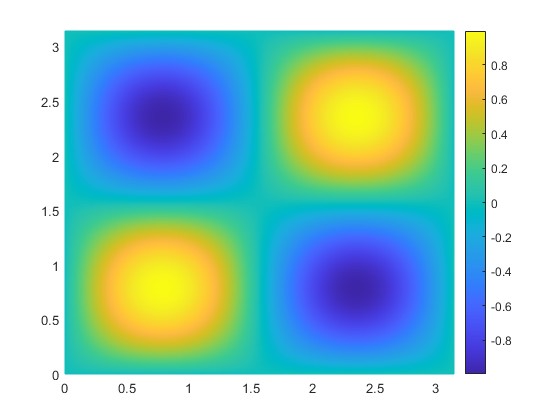}}	\subfigure[Basis of the adjoint POD]{\includegraphics[width=0.32\linewidth]{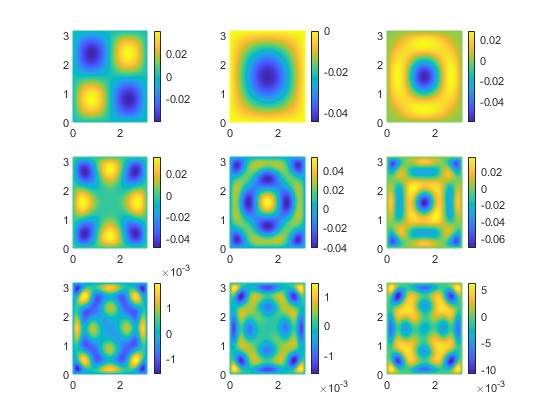}}
		\caption{ Comparison of basis between traditional POD and  the adjoint POD for $f=\sin(2x)\sin(2y)$. }\label{basis-comp-source-sin}
	\end{figure}

	\begin{figure}[htb]
		\flushright
		\subfigure[Exact source term]{\includegraphics[width=0.32\linewidth]{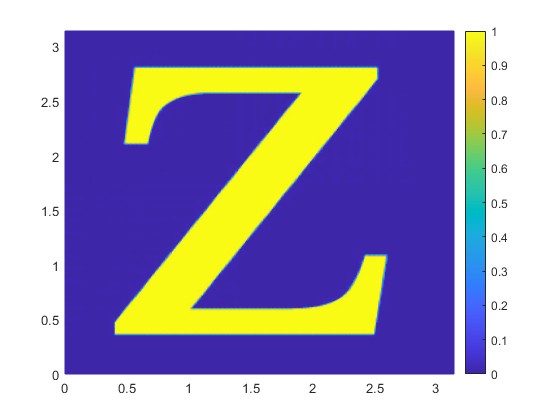}}
		\subfigure[Recovered result by traditional POD with exact right hand side ]{\includegraphics[width=0.32\linewidth]{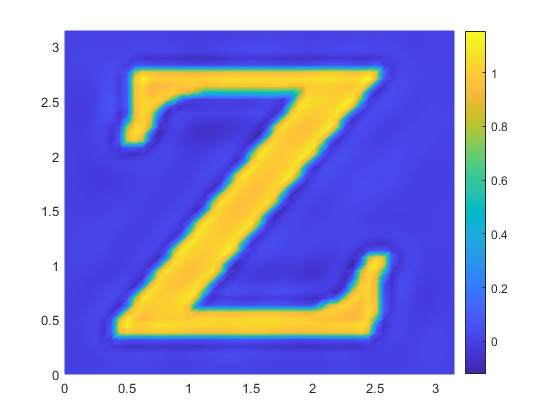}}	
		\subfigure[Basis of the traditional POD]{\includegraphics[width=0.32\linewidth]{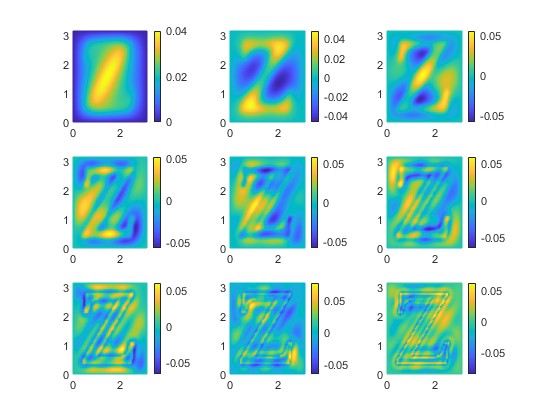}}
		\subfigure[Result by the adjoint POD]{\includegraphics[width=0.32\linewidth]{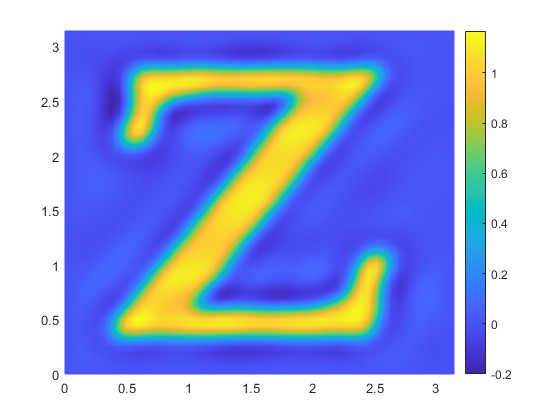}}	\subfigure[Basis of the adjoint POD]{\includegraphics[width=0.32\linewidth]{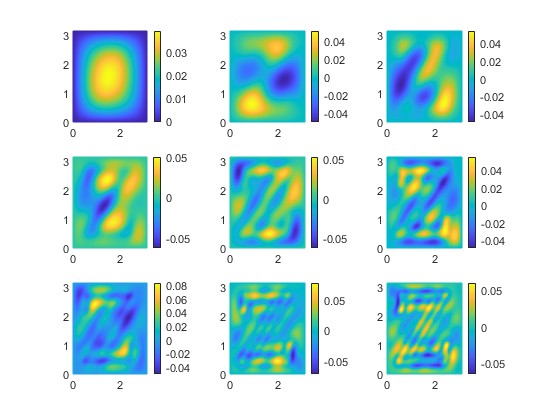}}
		\caption{ Comparison of basis between traditional POD and  the adjoint POD for $f^*$ of Z shaped function. }\label{basis-comp-source-Z}
	\end{figure}

        In the aforementioned cases, all the measured data were noise-free. We will now test the denoising method described in Section \ref{sec:convergence-source} by examining highly challenging cases with noise levels ranging from $10\%$ to $50\%$.

      \textbf{Example 4.3} In this example, we will evaluate the robustness of the adjoint POD method in the presence of noise. We consider the measurement data to be $m^n_i=u(d_i,T)+\sigma e_i$, $i=1, \cdots, n$, where $d_i$ represents positions within the domain $\Omega$, and $\{e_i\}^n_{i=1}$ are independent standard normal random variables. We will take 2500 positions $d_i$ uniformly distributed over the domain $\Omega$. 

      Figure \ref{sin2exp-noise-source} demonstrates the robustness of our method in the presence of significant noise. Even with a 50\% noise level, where the measured data is entirely obscured by noise as shown in Figure \ref{sin2exp-noise-source} (c), our method is still able to recover the source term as depicted in Figure \ref{sin2exp-noise-source} (f).

 \begin{figure}[htb]
		\centering
		\subfigure[Exact source term]{\includegraphics[width=0.32\linewidth]{Figures/source-sin2exp-true.jpg}}
        \subfigure[Measured data with 25$\%$ noise]{\includegraphics[width=0.32\linewidth]{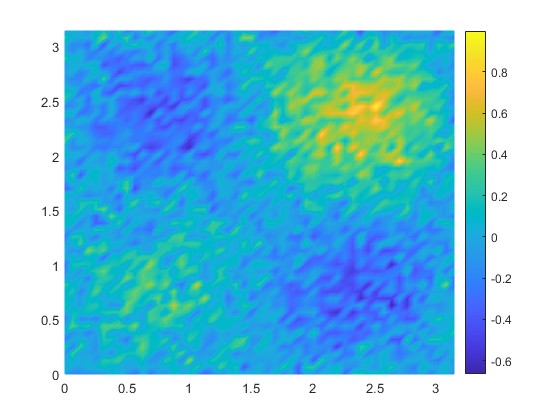}}	
        \subfigure[Measured data with 50$\%$ noise]{\includegraphics[width=0.32\linewidth]{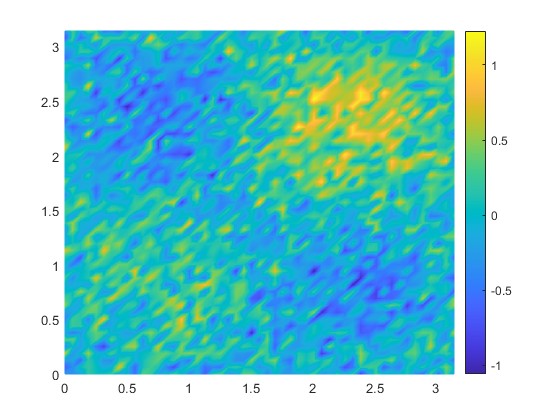}}
		\subfigure[Recovered result for 10$\%$ noise]{\includegraphics[width=0.32\linewidth]{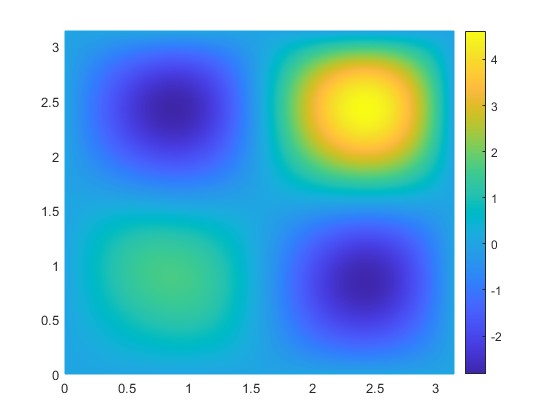}}
        \subfigure[Recovered result for 25$\%$ noise]{\includegraphics[width=0.32\linewidth]{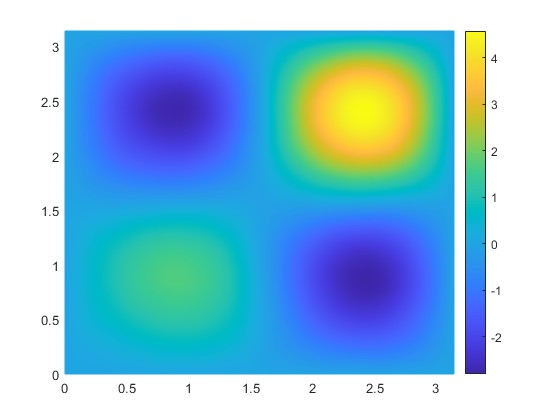}}	
		\subfigure[Recovered result for 50$\%$ noise]{\includegraphics[width=0.32\linewidth]{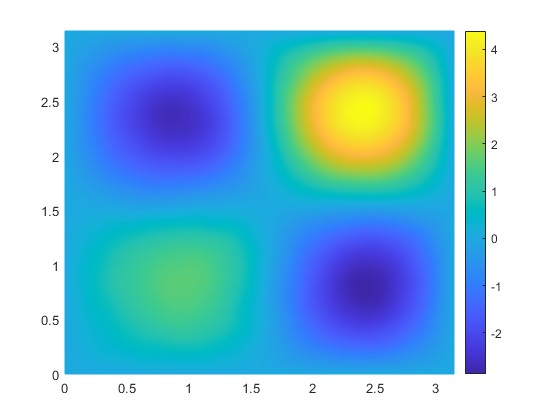}}	
		\caption{ Robustness of the adjoint POD against the noise for $f=sin(2x)sin(2y)e^{\frac{x+y}{\pi}}$. }\label{sin2exp-noise-source}
\end{figure}

 \subsection{Examples for backward problem}
	
 In this subsection, we will apply the new POD method to recover the initial term $g$ as discussed in Section \ref{sec:backward}. We will collect the data at the time $T=0.05$.
	

       \textbf{Example 4.4} We will demonstrate the importance of selecting the appropriate POD basis functions. For the same source term, we apply different right-hand sides of equation \eqref{backward} to derive the POD basis functions. Then, we solve the backward problem using different POD basis functions. 
       
       Figure \ref{right POD basis-ini} illustrates the corresponding results. The true source term is $\sin(2x)\sin(2y)e^{\frac{x+y}{\pi}}$, and its surface plot is shown in Figure \ref{right POD basis-ini} (a). Figure \ref{right POD basis-ini} (b) presents the reconstruction result using our adjoint POD method proposed in this paper, which demonstrates the effectiveness and efficiency of our new POD method in recovering the source term. Figure \ref{right POD basis-ini} (c) shows the result when we use an incorrect right-hand side to generate the POD basis functions. In this case, we use $\sin(x)\sin(y)$ as the right-hand side in \eqref{backward} to generate the POD basis functions. Figure \ref{right POD basis-ini} (d) displays the result when we use an A-shaped function as the right-hand side to generate the POD basis functions. It can be observed that Figure \ref{right POD basis-ini} (b) provides a good reconstruction, while Figure \ref{right POD basis-ini} (c) and Figure \ref{right POD basis-ini} (d) yield incorrect results. Particularly in Figure \ref{right POD basis-ini} (d), the recovered image is entirely different from the exact source term.

       In this example, we demonstrate the importance of choosing the correct basis to solve the inverse problem. Using an inappropriate set of basis functions may lead to unsatisfactory results. Our proposed adjoint POD provides a set of suitable basis functions. In the upcoming examples, we will compare our adjoint POD basis functions with the original true POD basis functions to verify Theorem \ref{theorem-pod-app-ini}.

	\begin{figure}[htb]
		\centering
		\subfigure[Exact initial term]{\includegraphics[width=0.32\linewidth]{Figures/source-sin2exp-true.jpg}}
        \subfigure[POD method proposed in this paper]{\includegraphics[width=0.32\linewidth]{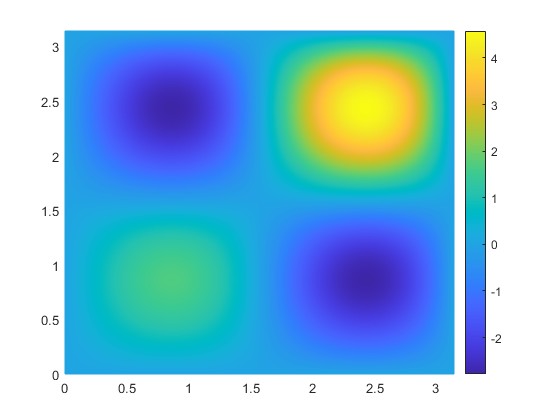}}\\
		\subfigure[Using the POD basis generated with the right hand side $sin(x)sin(y)$]{\includegraphics[width=0.32\linewidth]{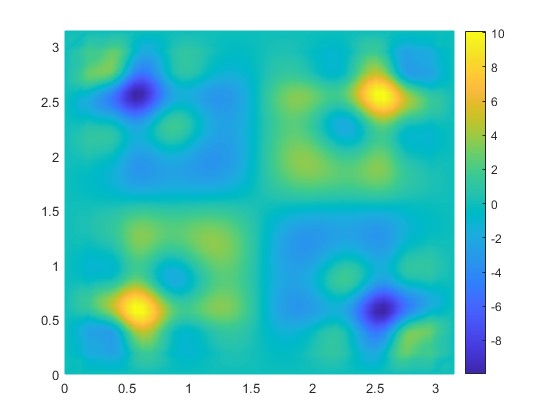}}	
		\subfigure[Using the POD basis generated with the right hand side of A-shaped function]{\includegraphics[width=0.32\linewidth]{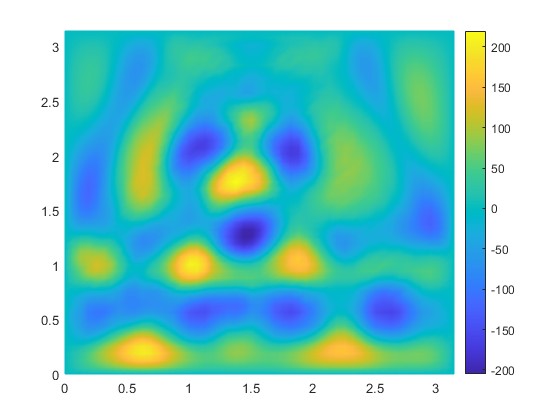}}
		\caption{The importance of choice of POD basis }\label{right POD basis-ini}
	\end{figure}

   \textbf{Example 4.5} In the following two examples, we first use the true source term as the right-hand side in Eq. \eqref{backward} to generate the POD basis functions. Then, we generate the POD basis functions using our proposed adjoint POD method. To verify Theorem \ref{theorem-pod-app-ini}, we will plot two sets of basis functions to determine if they are closely related. Figure \ref{fig:ini-eg_sin2exp-basis} shows the results using the exact source term $g^* = \sin(2x) \sin(2y) e^{\frac{x+y}{\pi}}$. Figures \ref{fig:ini-eg_sin2exp-basis} (b) and \ref{fig:ini-eg_sin2exp-basis} (d) demonstrate that both the traditional POD and our adjoint POD methods work well in recovering the true initial term. However, the difference is that our adjoint POD method does not require prior knowledge of the exact initial term, while the traditional POD method does, which is known as the inverse crime. Figures \ref{fig:ini-eg_sin2exp-basis} (c) and \ref{fig:ini-eg_sin2exp-basis} (e) display the basis functions of each method. We can conclude that the basis functions of our adjoint POD method are very close to the traditional one, and we obtain it solely from the measured data.

    Figure \ref{fig:ini-eg_A-basis} shows the results using the exact initial term $g^*$ of the A-shaped function. This example also illustrates why the POD method is more efficient in solving inverse problems compared to the finite element method. Figures \ref{fig:ini-eg_A-basis} (c) and \ref{fig:ini-eg_A-basis} (e) reveal that both the traditional POD method and our adjoint POD method's basis functions contain critical information about the exact initial term we aim to recover. In contrast, the finite element method's basis lacks any priori information about the true function we need to recover.

	\begin{figure}[htb]
		\flushright
		\subfigure[Exact initial term]{\includegraphics[width=0.32\linewidth]{Figures/source-sin2exp-true.jpg}}
		\subfigure[Recovered result by traditional POD with exact initial term]{\includegraphics[width=0.32\linewidth]{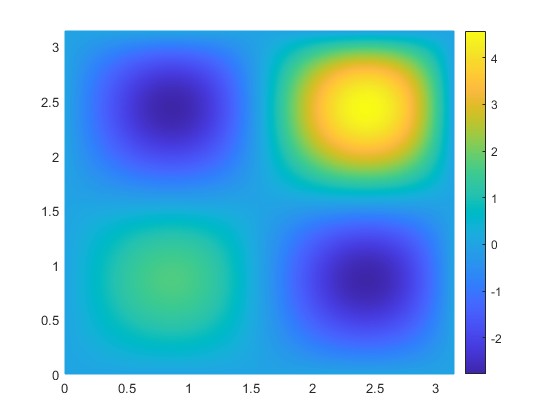}}	
		\subfigure[Basis of the traditional POD]{\includegraphics[width=0.32\linewidth]{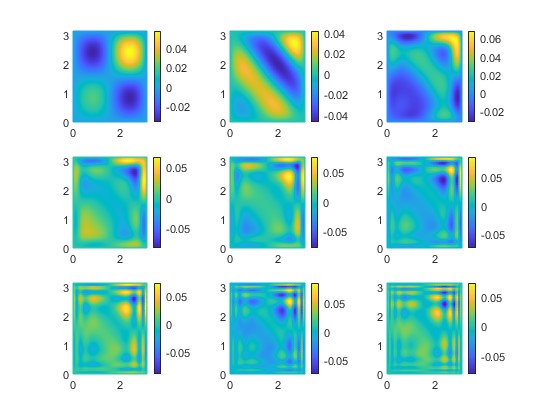}}
		\subfigure[Result by the adjoint POD]{\includegraphics[width=0.32\linewidth]{Figures/ini-sin2exp-newpod.jpg}}	\subfigure[Basis of the adjoint POD]{\includegraphics[width=0.32\linewidth]{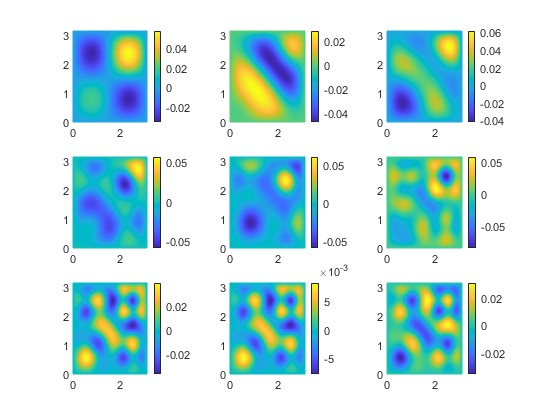}}
		\caption{ Comparison of basis between traditional POD and the adjoint POD for $f=sin(2x)sin(2y)e^{\frac{x+y}{\pi}}$. }
		\label{fig:ini-eg_sin2exp-basis}
	\end{figure}

	\begin{figure}[htb]
		\flushright
		\subfigure[Exact initial term]{\includegraphics[width=0.32\linewidth]{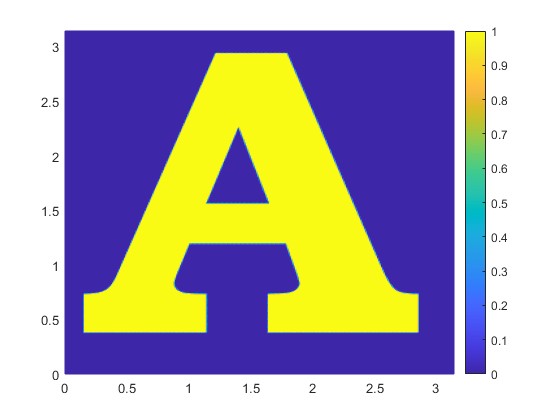}}
		\subfigure[Recovered result by traditional POD with exact initial term]{\includegraphics[width=0.32\linewidth]{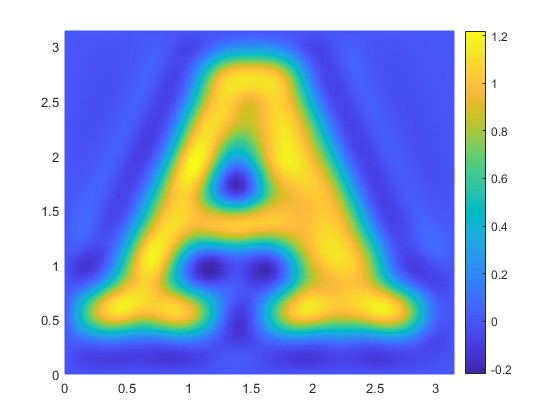}}	
		\subfigure[Basis of the traditional POD]{\includegraphics[width=0.32\linewidth]{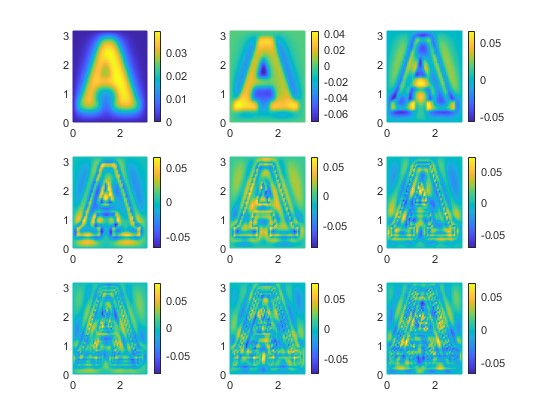}}
		\subfigure[Result by the adjoint POD]{\includegraphics[width=0.32\linewidth]{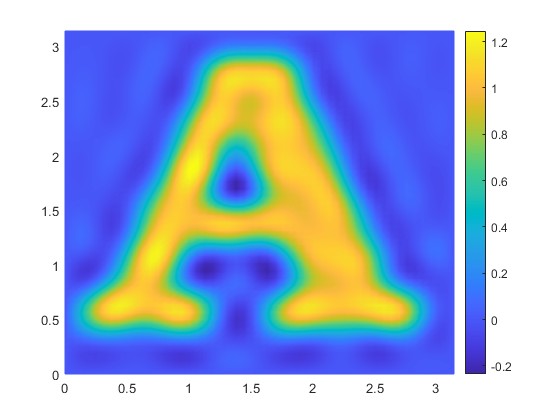}}	\subfigure[Basis of the adjoint POD]{\includegraphics[width=0.32\linewidth]{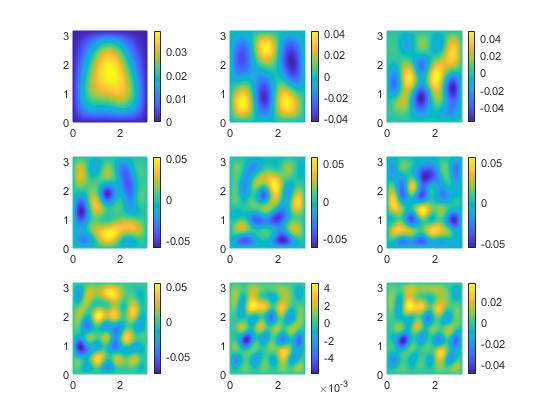}}
		\caption{  Comparison of basis between traditional POD and  the adjoint POD for $f^*$ of A shaped function.}
		\label{fig:ini-eg_A-basis}
	\end{figure}


 In the above cases, all the measured data are noise-free. We will now test the denoising method discussed in Section \ref{sec:convergence-ini} by examining very challenging cases with noise levels ranging from $10\%$  to $50\%$.


     \textbf{Example 4.6} In this example, we will test the robustness of the adjoint POD method against noise. We take the measurement data as $m^n_i = u(d_i, T) + \sigma e_i$, for $i = 1, \cdots, n$, where $d_i$s represent the positions inside of $\Omega$, and $\{e_i\}^n_{i=1}$ are independent standard normal random variables. We will use 2500 positions $d_i$ uniformly distributed over the domain $\Omega$. Figure \ref{sin2-noise-ini} demonstrates that our method is robust even in the presence of big noise. Remarkably, even with $50\%$ noise, when the measured data is completely obscured by noise as shown in Figure \ref{sin2-noise-ini} (c), we can still recover the source term, as seen in Figure \ref{sin2-noise-ini} (f).

 \begin{figure}[htb]
		\centering
		\subfigure[Exact initial term]{\includegraphics[width=0.32\linewidth]{Figures/source-sin2-true.jpg}}
        \subfigure[Measured data with 25$\%$ noise]{\includegraphics[width=0.32\linewidth]{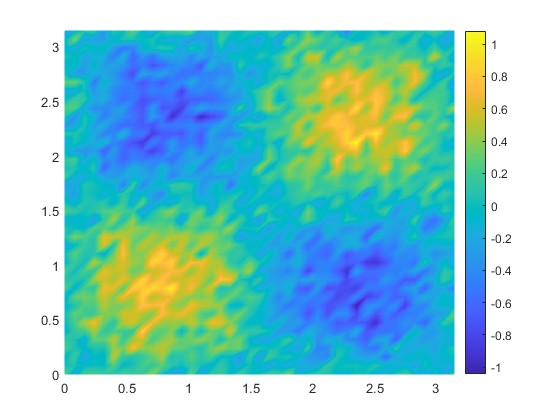}}	
        \subfigure[Measured data with 50$\%$ noise]{\includegraphics[width=0.32\linewidth]{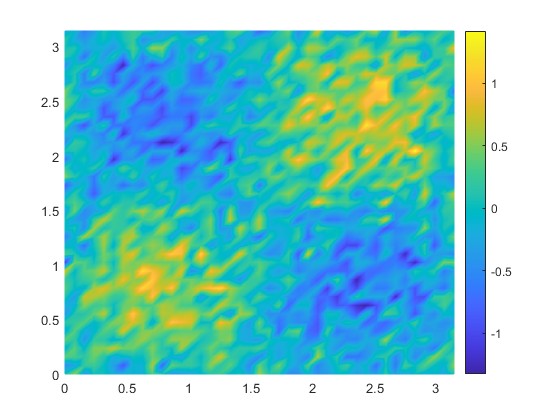}}
		\subfigure[Recovered result for 10$\%$ noise]{\includegraphics[width=0.32\linewidth]{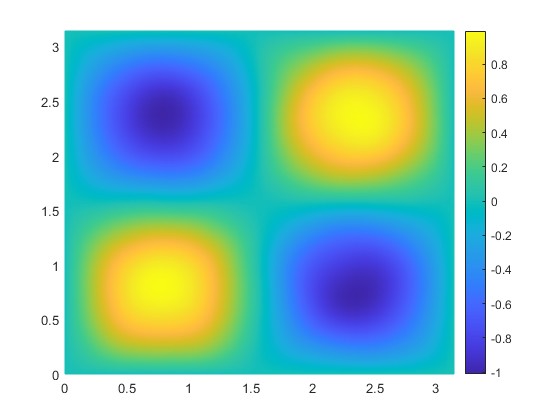}}
        \subfigure[Recovered result for 25$\%$ noise]{\includegraphics[width=0.32\linewidth]{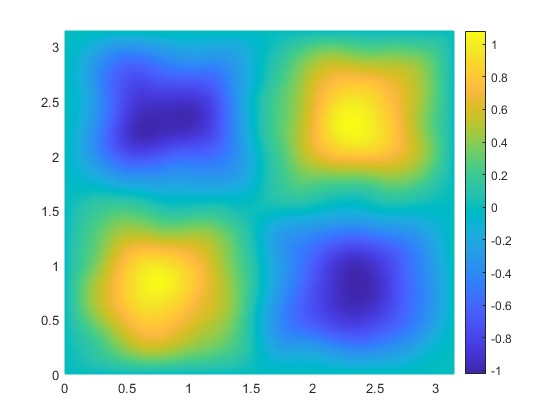}}	
		\subfigure[Recovered result for 50$\%$ noise]{\includegraphics[width=0.32\linewidth]{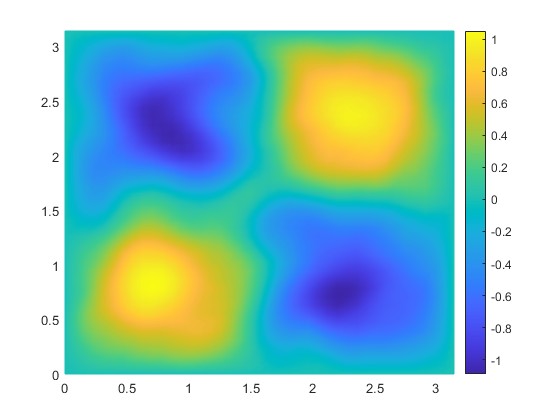}}	
		\caption{ Robustness of the adjoint POD against the noise for $g=sin(2x)sin(2y)$. }\label{sin2-noise-ini}
\end{figure}

    \textbf{Example 4.7}  Finally, we study a more interesting case. While the inverse source problem and the backward problem are two distinct problems, we have observed from the previous numerical examples that they share some commonalities. Specifically, the POD basis for both problems contains critical information about the functions one wants to recover. As a result, we will employ the POD basis functions derived from the inverse source problem to solve the backward problem. Please refer to Figure \ref{fig:ini-eg_A_using source-basis} for the reconstruction results.

	\begin{figure}[htb]
		\flushright
		\subfigure[Exact initial term]{\includegraphics[width=0.32\linewidth]{Figures/source-A-true.jpg}}
		\subfigure[Recovered result using the POD basis from backward problem]{\includegraphics[width=0.32\linewidth]{Figures/ini-A-newpod.jpg}}
		\subfigure[Recovered result using the POD basis from inverse source problem]{\includegraphics[width=0.32\linewidth]{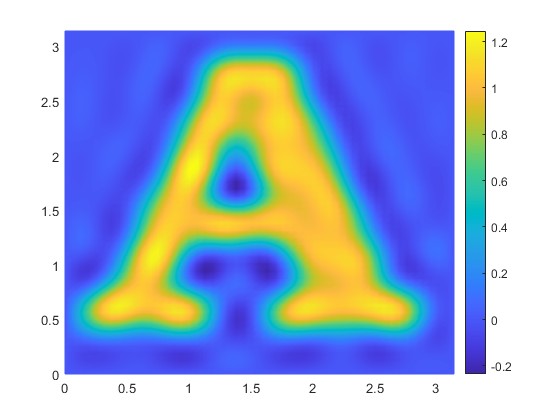}}
		\caption{ Solve the backward problem using the basis from inverse source problem. }
		\label{fig:ini-eg_A_using-source basis}
	\end{figure}

\section{Conclusion}
We have developed a data-driven and model-based approach for solving parabolic inverse source problems with uncertain data. The key idea is to exploit the model-based intrinsic low-dimensional structure of the underlying parabolic PDEs and construct data-based POD basis functions to achieve significant dimension reduction in the solution space. Equipped with the POD basis functions, we develop a fast algorithm that can compute the optimization problem in the inverse source problems. Hence, we obtain an effective data-driven and model-based approach for the inverse source problems and overcome the typical computational bottleneck of FEM in solving these problems.
Under a weak assumption on the regularity of the solution, we provide the convergence analysis of our POD algorithm in solving the forward parabolic PDEs and thus obtain the error estimate of the POD algorithm for the parabolic inverse source problems. Finally, we carry out numerical experiments to demonstrate the accuracy and efficiency of the proposed method. We also study other issues of the POD algorithm, such as the dependence of the error on the mesh size, the regularization parameter in the least-squares regularized minimization problems, and the number of POD basis functions. Through numerical results, we find that our POD algorithm provides significant computational savings over the FEM while yielding as good approximations as the FEM.  We expect an even better performance of efficiency can be obtained for 3D problems, which will be studied in our future works.

\section*{Acknowledgement}

\noindent
The research of W. Zhang is supported by the National Natural Science Foundation of China No. 12371423 and 12241104. The research of Z. Zhang is supported by Hong Kong RGC grant project 17307921, National Natural Science Foundation of China No. 12171406, Seed Funding for Strategic Interdisciplinary Research Scheme 2021/22 (HKU), and Seed Funding from the HKU-TCL Joint Research Centre for Artificial Intelligence. 

\appendix
\section{Proper orthogonal decomposition (POD) method}\label{appendix-POD}
	\noindent
Assuming that $u \in H^1_0(\Omega)$ is the solution to the weak formulation of the parabolic equation \eqref{parabolic-equation}, the construction of POD basis functions requires solution snapshots. These solution snapshots can be obtained by the appropriate technological means related to a specific application, such as experimental data or numerical methods.
 
	Given a set of solutions at different time instances $\big\{ u(\cdot, t_0), u(\cdot, t_1), \ldots, u(\cdot, t_M) \big\}$, where $t_k = k \Delta t$ with $\Delta t = \frac{T}{M}$ and $k = 0, \ldots, M$, we first obtain the solution snapshots $\{ y_1, \ldots, y_{M + 1},$ $ y_{M + 2}, \ldots, y_{2M + 1} \}$, where 
	$y_k = u(\cdot, t_{k - 1})$, $k = 1, \ldots, M + 1$, and $y_k = \overline{\partial} u(\cdot, t_{k - M - 1})$, $k = M + 2, \ldots, 2m + 1$ with $\overline{\partial} u(\cdot, t_{k}) = \frac{u(\cdot, t_{k}) - u(\cdot, t_{k - 1})}{\Delta t}$, $k = 1, \ldots, M$.
	 
	Then, the POD basis functions $\{ \psi_k \}_{k = 1}^{{N_{\text{pod}}}}$ are constructed by minimizing the following projection error: 
	\begin{align}
		\frac{1}{2m+1}\Big( \sum_{j = 0}^M \big\| u(t_j) - \sum_{k = 1}^{{N_{\text{pod}}}} (u(t_j), \psi_k)_{L^2(\Omega)} \psi_k \big\|_{L^2(\Omega)}^2 \\
		+	\sum_{j = 1}^M \big\| \overline{\partial} u(t_j) - \sum_{k = 1}^{{N_{\text{pod}}}} (\overline{\partial} u(t_j), \psi_k)_{L^2(\Omega)} \psi_k \big\|_{L^2(\Omega)}^2 \Big)
	\end{align}
	subject to the constraints that $\big(\psi_{k_1}(\cdot),\psi_{k_2}(\cdot)\big)_{L^2(\Omega)}=\delta_{k_1k_2}$, $1\leq k_1,k_2 \leq N_{\text{pod}}$, where $\delta_{k_1k_2}=1$ if $k_1=k_2$, otherwise $\delta_{k_1k_2}=0$. 
	Here, we use $N_{\text{pod}}$ to denote the number of POD basis functions that will be extracted from solution snapshots. 
	
	Let ${V_{\text{pod}}} = \text{span}\{ \psi_1, \ldots, \psi_{N_{\text{pod}}} \}$ denote the finite-dimensional space spanned by the POD basis functions. Using the method of snapshot proposed by Sirovich \cite{Sirovich:1987}, we know that the minimizing problem can be reduced to the following eigenvalue problem:
	
	\begin{equation}\label{eigenvalueproblemPOD}
		Kv = \mu v,
	\end{equation}
	where the correlation matrix $K$ is computed from the solution snapshots $\{ y_1, y_2,\ldots, y_{2M + 1} \}$ with entries $K_{ij} = (y_i, y_j)_{L^2(\Omega)}$, $i,j = 1, \ldots, 2M + 1$, and $K$ is symmetric and semi-positive definite. 
	We sort the eigenvalues in a decreasing order as $\lambda_1 \geq \lambda_2 \geq ... \geq \lambda_{2m + 1}$ and
	the corresponding eigenvectors are denoted by $v_k$, $k=1,...,2M + 1$. It can be shown that if the POD basis functions are constructed by
	
	\begin{equation}\label{PODbasisMethodOfSnapshot}
		\varphi_{k}(\cdot) = \frac{1}{\sqrt{\lambda_k}}\sum_{j=1}^{2M + 1}(v_k)_j u(\cdot,t_j), \quad 1\leq k \leq N_{\text{pod}},
	\end{equation}
	where $(v_k)_j$ is the $j$-th  component of the eigenvector $v_k$, they minimize the projection error. 
	
 	The approximation error for the POD method has been studied extensively in the literature, particularly in the works \cite{holmes:98} and \cite{Willcox2015PODsurvey}. 
  
	\begin{prop}[Sec. 3.3.2, \cite{holmes:98} or p. 502, \cite{Willcox2015PODsurvey}]\label{Prop_PODError}
		Let $\lambda_1 \geq \lambda_2 \geq ... \geq \lambda_{2M + 1} \geq 0$ denote the non-negative eigenvalues of $K$ in the eigenvalue problem \eqref{eigenvalueproblemPOD}. Then, $\{\psi_k\}_{k=1}^{N_{\text{pod}}}$ constructed according to the method of snapshots \eqref{PODbasisMethodOfSnapshot} is the set of POD basis functions, and we have the following error formula:
		\begin{equation}\label{SnapshotOfSolutions}		
			\frac{\sum_{i=1}^{2M + 1}\left|\left|\tilde y_i - \sum_{k=1}^{{N_{\text{pod}}}}\big(\tilde y_i,\psi_k(\cdot)\big)_{L^2(\Omega}\psi_k(\cdot)\right|\right|_{L^2(\Omega)}^{2}}{		 \sum_{i=1}^{2M+1}\left|\left|\tilde y_i\right|\right|_{L^2(\Omega)}^{2}} 
			= \frac{\sum_{k={N_{\text{pod}}}+1}^{2M + 1}  \lambda_k}{\sum_{k=1}^{2M + 1}  \lambda_k},
		\end{equation} 
		where the number $N_{\text{pod}}$ is determined according to the decay of the ratio $\rho=\frac{\sum_{k={N_{\text{pod}}}+1}^{2M + 1}  \lambda_k}{\sum_{k=1}^{2M + 1}\lambda_k}$. 
	\end{prop}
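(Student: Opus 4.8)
The plan is to establish the identity \eqref{SnapshotOfSolutions} by finite-dimensional linear algebra on the correlation matrix $K$, using its spectral decomposition together with the explicit formula \eqref{PODbasisMethodOfSnapshot} for the POD basis. Write $H=L^2(\Omega)$, let $\{y_i\}_{i=1}^{2M+1}$ be the snapshots, and let $\{\psi_k\}_{k=1}^{N_{\text{pod}}}$ be constructed by \eqref{PODbasisMethodOfSnapshot} from the eigenpairs $(\lambda_k,v_k)$ of $K$. First I would record the two elementary identities that drive everything: writing $\psi_k=\tfrac{1}{\sqrt{\lambda_k}}\sum_j (v_k)_j y_j$ and using $K_{ij}=(y_i,y_j)_H$ with $Kv_k=\lambda_k v_k$ and the orthonormality of the $v_k$, one obtains $(y_i,\psi_k)_H=\tfrac{1}{\sqrt{\lambda_k}}(Kv_k)_i=\sqrt{\lambda_k}\,(v_k)_i$ and $(\psi_k,\psi_l)_H=\tfrac{1}{\sqrt{\lambda_k\lambda_l}}\,v_k^\top K v_l=\delta_{kl}$, so the $\psi_k$ do form an orthonormal system (indices with $\lambda_k=0$ are simply discarded and contribute to neither side of \eqref{SnapshotOfSolutions}).

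Since $\{\psi_k\}_{k=1}^{N_{\text{pod}}}$ is orthonormal, the inner sum $\sum_{k=1}^{N_{\text{pod}}}(y_i,\psi_k)_H\psi_k$ appearing in \eqref{SnapshotOfSolutions} is exactly the orthogonal projection $P_{\text{pod}}y_i$ of $y_i$ onto $V_{\text{pod}}$, whence by Pythagoras $\|y_i-P_{\text{pod}}y_i\|_H^2=\|y_i\|_H^2-\sum_{k=1}^{N_{\text{pod}}}|(y_i,\psi_k)_H|^2$. Summing over $i$ and inserting the first identity gives $\sum_i|(y_i,\psi_k)_H|^2=\lambda_k\sum_i (v_k)_i^2=\lambda_k$ because $v_k$ is a unit vector, while $\sum_i\|y_i\|_H^2=\sum_i K_{ii}=\operatorname{tr}(K)=\sum_{k=1}^{2M+1}\lambda_k$. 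Hence the numerator of \eqref{SnapshotOfSolutions} equals $\sum_{k=1}^{2M+1}\lambda_k-\sum_{k=1}^{N_{\text{pod}}}\lambda_k=\sum_{k=N_{\text{pod}}+1}^{2M+1}\lambda_k$ and the denominator equals $\sum_{k=1}^{2M+1}\lambda_k$, which is precisely the claimed formula.

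It remains to justify that this $\{\psi_k\}$ is indeed \emph{the} POD basis, i.e. that it minimizes the projection error over all orthonormal systems of the same cardinality; here I would either cite \cite{holmes:98,Willcox2015PODsurvey} directly, or give the short variational argument: maximizing $\sum_i|(y_i,\varphi)_H|^2$ over unit vectors $\varphi\in\operatorname{span}\{y_j\}$ reduces, after expressing $\varphi$ through its coordinate vector against the $y_j$, to maximizing the Rayleigh quotient of $K$, whose successive constrained maximizers are $v_1,v_2,\dots$; this is the method-of-snapshots form of the Schmidt--Eckart--Young theorem, and peeling off one direction at a time yields exactly \eqref{PODbasisMethodOfSnapshot}. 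I do not expect a genuine obstacle in any of this — every step is a one-line identity; the only points that need a little care are the harmless treatment of zero eigenvalues and keeping the snapshot indexing (the $y_k$ versus the difference quotients $\overline{\partial}u(t_k)$) consistent throughout, and if one insists on a fully self-contained account, the optimality step via the Rayleigh-quotient argument is the one piece that is more than a direct computation.
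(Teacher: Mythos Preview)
Your argument is correct and is precisely the standard method-of-snapshots derivation: orthonormality of the $\psi_k$ from $Kv_k=\lambda_k v_k$, the coefficient identity $(y_i,\psi_k)=\sqrt{\lambda_k}(v_k)_i$, Pythagoras, and $\operatorname{tr}(K)=\sum_k\lambda_k$. There is nothing to compare against, however, because the paper does not supply its own proof of this proposition; it is quoted as a known result with explicit attribution to \cite{holmes:98} and \cite{Willcox2015PODsurvey} in the proposition header, and what you have written is exactly the argument one finds in those references.
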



\begin{thebibliography}{10}

\bibitem{agmon}
S.~Agmon.
\newblock {\em Lectures on Elliptic Boundary Value Problems}.
\newblock Van Norstrand, Princeton, NJ, 1965.

\bibitem{alla2013time}
A.~Alla and M.~Falcone.
\newblock A time-adaptive {POD} method for optimal control problems.
\newblock {\em IFAC Proceedings Volumes}, 46(26):245--250, 2013.

\bibitem{Willcox2015PODsurvey}
P.~Benner, S.~Gugercin, and K.~Willcox.
\newblock A survey of projection-based model reduction methods for parametric
  dynamical systems.
\newblock {\em SIAM Review}, 57(4):483--531, 2015.

\bibitem{berkooz1993POD}
G.~Berkooz, P.~Holmes, and J.~Lumley.
\newblock The proper orthogonal decomposition in the analysis of turbulent
  flows.
\newblock {\em Annual review of fluid mechanics}, 25(1):539--575, 1993.

\bibitem{Chen-Zhang}
Z.~Chen, R.~Tuo, and W.~Zhang.
\newblock Stochastic convergence of a nonconforming finite element method for
  the thin plate spline smoother for observational data.
\newblock {\em SIAM Journal on Numerical Analysis}, 56(2):635--659, 2018.

\bibitem{Chen-Zhang2021}
Z.~Chen, W.~Zhang, and J.~Zou.
\newblock Stochastic convergence of regularized solutions and their finite
  element approximations to inverse source problems.
\newblock {\em SIAM Journal on Numerical Analysis}, 60(2):751--780, 2022.

\bibitem{Fleckinger}
J.~Fleckinger and M.~Lapidus.
\newblock Eigenvalues of elliptic boundary value problems with an indefinite
  weight function.
\newblock {\em Transactions of the American Mathematical Society},
  295(1):305--324, 1986.

\bibitem{gu2021error}
H.~Gu, J.~Xin, and Z.~Zhang.
\newblock Error estimates for a {POD} method for solving viscous {G}-equations
  in incompressible cellular flows.
\newblock {\em SIAM Journal on Scientific Computing}, 43(1):A636--A662, 2021.

\bibitem{hesthaven2016certified}
J.~Hesthaven, G.~Rozza, and B.~Stamm.
\newblock {\em Certified reduced basis methods for parametrized partial
  differential equations}.
\newblock Springer, 2016.

\bibitem{holmes:98}
P.~Holmes, J.~Lumley, and G.~Berkooz.
\newblock {\em Turbulence, coherent structures, dynamical systems and
  symmetry}.
\newblock Cambridge University Press, 1998.

\bibitem{kunisch2001galerkin}
K.~Kunisch and S.~Volkwein.
\newblock Galerkin proper orthogonal decomposition methods for parabolic
  problems.
\newblock {\em Numerische {M}athematik}, 90(1):117--148, 2001.

\bibitem{kunisch2004hjb}
K.~Kunisch, S.~Volkwein, and L.~Xie.
\newblock {HJB}-{POD}-based feedback design for the optimal control of
  evolution problems.
\newblock {\em SIAM Journal on Applied Dynamical Systems}, 3(4):701--722, 2004.

\bibitem{quarteroni2015reduced}
A.~Quarteroni, A.~Manzoni, and F.~Negri.
\newblock {\em Reduced basis methods for partial differential equations: an
  introduction}, volume~92.
\newblock Springer, 2015.

\bibitem{sirovich1987}
L.~Sirovich.
\newblock Turbulence and the dynamics of coherent structures. {I}. {C}oherent
  structures.
\newblock {\em Quarterly of applied mathematics}, 45(3):561--571, 1987.

\bibitem{Sirovich:1987}
L.~Sirovich.
\newblock Turbulence and the dynamics of coherent structures. {I}. {C}oherent
  structures.
\newblock {\em Quarterly of applied mathematics}, 45(3):561--571, 1987.

\bibitem{volkwein2013proper}
S.~Volkwein.
\newblock Proper orthogonal decomposition: {T}heory and reduced-order
  modelling.
\newblock {\em Lecture Notes, University of Konstanz}, 4(4), 2013.

\bibitem{ZhangJCP2023}
Z.~Wang, W.~Zhang, and Z.~Zhang.
\newblock A data-driven model reduction method for parabolic inverse source
  problems and its convergence analysis.
\newblock {\em Journal of Computational Physics}, 487:112156, 2023.

\bibitem{zhang-zhang-arxiv2023}
Z.~Wang, W.~Zhang, and Z.~Zhang.
\newblock Stochastic convergence of regularized solutions for backward heat
  conduction problems.
\newblock {\em arXiv:2311.03623}, 2023.

\end{thebibliography}
\end{document}